\newtheorem{theorem}{Theorem}
\newtheorem{lemma}[theorem]{Lemma}
\newtheorem{corollary}[theorem]{Corollary}
\begin{document}
\title{Ordinal Notations in Caucal Hierarchy}
\author{Fedor~Pakhomov\thanks{This work was partially supported by RFFI grant 15-01-09218 and Dynasty foundation.}\\Steklov Mathematical Institute,\\Moscow\\ \texttt{pakhfn@mi.ras.ru}}
\date{December 2015}
\maketitle

\begin{abstract} Caucal hierarchy is a well-known class of graphs with decidable monadic theories. It were proved by L.~Braud and A.~Carayol that well-orderings in the hierarchy are the well-orderings with order types less than $\varepsilon_0$. Naturally, every well-ordering from the hierarchy could be considered as a constructive  system of ordinal notations.  In proof theory  constructive  systems of ordinal notations with fixed systems of cofinal sequences are used for the purposes of classification of provable recursive functions of theories. We show that any well-ordering from the hierarchy could be extended by a monadically definable system of cofinal sequences with Bachmann property. We show that the growth speed of functions from fast-growing hierarchy based on constructive ordinal notations  from Caucal hierarchy may be only slightly influenced by the choice of monadically definable systems of cofinal sequences.  We show that for ordinals less than $\omega^\omega$ a fast-growing hierarchy based on any system of ordinal notations from Caucal hierarchy coincides with Löb-Wainer hierarchy.
\end{abstract}

\section{Introduction}
We are interested in the hierarchy of (possibly infinite) graphs  that were introduced by D.~Caucal \cite{Cau96}. Graphs in the hierarchy are directed graphs with colored edges. They  naturally  could be considered as structures with several binary relations (one relation for each color). All the graphs in the hierarchy have decidable monadic theories. 

There are several characterizations of this hierarchy (we refer to the survey \cite{Ong07}). We will use definition  due to Caucal \cite{Cau02,CarWoh03}. The level zero of hierarchy consists of all finite graph. Each next level of hierarchy  consists of all graphs that are monadically interpretable in unfoldings of graphs of the previous level. 

 Caucal hierarchy contains wide spectrum of  structures. In our investigation we focus on well-orderings that are monadically definable in some graphs from the hierarchy. L.~Braud has shown that for every $\alpha<\varepsilon_0=\lim_{n\to\omega}\underbrace{\omega^{\dots^{\omega}}}_{\mbox{\scriptsize $n$ times}}$ there exists a well-ordering of order type $\alpha$ in Caucal hierarchy \cite{Bra09}. L.~Braud and A.~Carayol have shown that the converse is true and any well-ordering that lies in the hierarchy have order type less than $\varepsilon_0$.

The ordinal $\varepsilon_0$ is known to be proof-theoretic ordinal of first-order arithmetic (for a survey of ordinals in proof theory we refer to \cite{Rat07}). Natural fragments of first-order arithmetic have proof-theoretic ordinals less than $\varepsilon_0$. Constructive ordinal notation systems are used within the context of study of proof-theoretic ordinals. The standard general method of defining constructive ordinal notations is Kleene $\mathcal{O}$ \cite{Kln38}\cite{Chr38}. It is known that wast number of proof-theoretic applications is sensitive to the choice of constructive representations of ordinals. There is a long-standing conceptual problem of determining what is a {\it natural} or {\it canonical} ordinal notation system \cite{Kre76,Fef96}. 

One of the uses of ordinals in proof theory are classifications of provably-recursive functions of theories. There are several  approaches to define fast-growing recursive functions using ordinal notations \cite{Ros84}.  There are number of connections between this different methods of constructing fast-growing functions. Most of the approaches essentially uses systems of cofinal sequences for ordinals. A cofinal sequence (of the length $\omega$) for an ordinal $\alpha$ is a sequence $\alpha_0<\alpha_1<\ldots$  such that every $\alpha_i<\alpha$ and $\lim_{n\in \omega}\alpha_n=\alpha$. Let us is considered some proper initial segment of countable ordinals. A system of cofinal sequences for ordinals in the segment is some assignment of cofinal sequences (of length $\omega$) for all limit ordinals in the segment. In this paper we focus on one of the methods to construct fast-growing functions from ordinals with systems of cofinal sequences: fast-growing hierarchy. 

Because monadic theories of graphs in Caucal hierarchy are decidable and in every graph from the hierarchy all the vertices are monadically definable, if we have a monadically definable well-ordering in some graph from the hierarchy then we can straightforward build a constructive system of ordinal notations from it. In the present paper we investigate how can we naturally define system of cofinal sequences for well-orderings from the hierarchy. We will represent systems of cofinal sequences by binary predicates $x R y$ that means ``$x$ is in an element of the cofinal sequence for $y$''; in this way we can talk about definability of systems of cofinal systems in structures. We show that if there is a graph with monadically definable well-ordering in $n$-th level of Caucal hierarchy then in the same level there is an extension of this graph by new colors and edges marked by the new colors such that some system of cofinal sequences is monadically definable in the extension. Moreover, for a deterministic tree from Caucal hierarchy there is always a monadically definable system of cofinal sequences for a monadically definable well-ordering; note that every graph in Caucal hierarchy could be monadically interpreted in a deterministic tree from  the same level of Caucal hierarchy \cite{CarWoh03}.

There is Bachmann property for systems of cofinal sequences that guarantees that hierarchies of functions based on them behave relatively well \cite{Bach67,Sch77}. We show that if there is a graph with monadically definable well-ordering and system of cofinal sequences for it in $n$-th level of Caucal hierarchy then in the same level there is an extension of this graph by new colors and edges of the new colors such that there is a monadically definable system of cofinal sequences with Bachmann property that contains only subsequences of the original cofinal sequences. Moreover, for a deterministic tree from Caucal hierarchy the same could be done without switching to an extension.

We consider graphs from Caucal hierarchy with monadically-definable well-ordering $\prec$ and two monadically definable systems of cofinal sequences with Bachmann property. Two fast-growing hierarchies of recursive functions arises: $F^1_{a}(x)$ and $F^2_{a}(x)$. We show that if $a \prec b$ then $F^1_b(x)$  eventually dominates  $F^2_a(x)$ and $F^2_b(x)$  eventually dominates  $F^1_a(x)$, i.e. hierarchies are very close to each other.

Let us consider a graph from Caucal hierarchy with monadically-definable well-ordering $\prec$ and monadically definable system of cofinal sequences with Bachmann property. We have  fast-growing hierarchy of recursive functions based on this well-ordering and system of cofinal sequences: $F'_a(x)$. Let us denote by $f$ the embedding of the well-ordering onto initial segment of ordinals. Most wide-known concrete fast-growing hierarchy up to $\varepsilon_0$  is Löb-Wainer hierarchy \cite{LobWai70} we denote the functions from it as $F_{\alpha}(x)$.  We show that for all $\alpha<\beta<\omega^\omega$, if $f^{-1}(\beta)$ is defined then $F'_{f^{-1}(\beta)}(x)$  eventually dominates  $F_\alpha(x)$ and $F_\beta(x)$ eventually dominates  $F'_{f^{-1}(\alpha)}(x)$.

\section{Caucal Hierarchy of Graphs}
Caucal hierarchy is an hierarchy of directed graphs with colored edges. Levels of the hierarchy are indexed by natural numbers. Formally, {\it directed graph with colored edges} is a tuple $(\mathbf{C},V,U)$, where $\mathbf{C}$ is a set of edge colors, $\mathbf{C}$ is finite, $V$ is a set of vertices, and  $U$ is a set of edges,  $U\subset V\times\mathbf{C}\times V$. Because directed graphs with colored edges are the only graphs that we consider, if we use the term graph it will refer to directed graphs with colored edges.    We say that a triple $(v_1,c,v_2)\in V\times\mathbf{C}\times V$ is an {\it edge of color} $c$ from $v_1$ to $v_2$; we also say that the edge $(v_1,c,v_2)$ is marked by color $c$ .

We denote the set of all graphs from the $n$-th level of Caucal hierarchy by $\mathbf{Graph}_n$. We will have $\mathbf{Graph}_n\subset\mathbf{Graph}_{n+1}$ for all $n$.  $\mathbf{Graph}_0$ is the collection of all finite graphs (formally, in axiomatic set theory we can't consider such a set; we could overcome this difficulties, for example as following: instead of considering all finite graphs we can consider only graphs, where $\mathbf{C}$ and $V$ are hereditary countable sets). 

Suppose we have a graph $G=(\mathbf{C},V,U)$ and a vertex $v_0$. The {\it unfolding} of $G$ from $v_0$ is the graph $(\mathbf{C},P,R)$, where $P$ is the set of all sequences $(p_0,a_0,p_1,a_1\ldots,a_{n-1},p_{n})$ such that  $n\ge 0$, $p_i\in V$, $a_i\in U$, and every $a_i$ is of the form $(p_i,x,p_{i+1})$ and $R$ consist of all edges $$((p_0,a_0,p_1,a_1\ldots,a_{n-1},p_{n}),c,(p_0,a_0,p_1,a_1\ldots,a_{n-1},p_{n},(p_n,c,p_{n+1}),p_{n+1}))$$ such that $$(p_0,a_0,p_1,a_1\ldots,a_{n-1},p_{n}),(p_0,a_0,p_1,a_1\ldots,a_{n-1},p_{n},(p_n,c,p_{n+1}),p_{n+1})\in P.$$ We denote by $\mathcal{U}(G,v_0)$ the result of unfolding of $G$ from $v_0$.

Graph $(\mathbf{C},V,U)$ could be treated as the structure with the domain $V$, signature of binary predicates $\{\textsc{R}_c\mid c\in \mathbf{C}\}$, and the following interpretations of the predicates: 
$$v_1 \textsc{R}_c v_2 \stackrel{\text{def}}{\iff} (v_1,c,v_2)\in U.$$
Vice versa, for a finite set $\mathbf{C}$ a structure $\mathfrak{A}$ with the domain $A$ and the signature consists of binary predicates $\{\textsc{R}_c\mid c\in \mathbf{C}\}$ could be treated as a graph with the set of colors $\mathbf{C}$, the set of vertices $A$, and the set of edges $\{(v_1,c,v_2) \mid v_1,v_2\in A\textrm{ and }\mathfrak{A}\models v_1 \textsc{R}_c v_2\}$. 
We freely switch between this two formalisms. 

{\it Monadic second-order formulas} are formulas in which in addition to normal first-order connectivities and quantifiers, unary predicate variables and quantifiers over unary predicates are allowed. We use capital Latin letters  $X,Y,Z,\ldots$ for predicate variables and small Latin letters $x,y,z,\ldots$ for first-order variables.   The truth of monadic formulas in a structure $\models_{\mathrm{MSO}}$ could be defined in a natural way with unary predicate variables ranging over arbitrary subsets of the domain of the structure. Suppose $\mathfrak{A}$ and $\mathfrak{B}$ are structures with predicate-only signatures.

In a natural way for a structure $\mathfrak{A}$ with the domain $A$ we can talk about {\it monadically definable} subsets of $$(\mathcal{P}(A))^n\times A^m,$$ for each $n,m$. A set $$B\subset (\mathcal{P}(A))^n\times A^m$$ is monadically definable if there is a monadic formula $\varphi(X_1,\ldots, X_n,x_1,\ldots x_m)$ such that for every $(P_1,\ldots,P_n,p_1,\ldots,p_m)\in (\mathcal{P}(A))^n\times A^m $ we have
$$(P_1,\ldots,P_n,p_1,\ldots,p_m)\in B \iff \mathfrak{A}\models_{\mathrm{MSO}}\varphi(P_1,\ldots,P_n,p_1,\ldots,p_m).$$

 A {\it monadic interpretation} of a structure $\mathfrak{A}$ in a structure $\mathfrak{B}$ is a function $f$ that maps symbols from the signature of $\mathfrak{A}$ to monadic formulas of the signature of $\mathfrak{B}$ such that for an $n$-ary symbol $R$ from the signature of $\mathfrak{A}$ the formula $f(R)$ is of the form $F(x_1,\ldots,x_n)$ and there exists a bijection $g$ from the domain of $\mathfrak{A}$ to the domain of $\mathfrak{B}$ with the following holds for any $n$, $n$-ary symbol $R$ from the signature of $\mathfrak{A}$, and $v_1,\ldots,v_n$ from the domain of $\mathfrak{A}$:
$$\mathfrak{A}\models_{\mathrm{MSO}} R(v_1,\ldots,v_n)\iff \mathfrak{B}\models_{\mathrm{MSO}} f(R)(g(v_1),\ldots,g(v_n)).$$    
For graphs $G_1,G_2$, a {\it monadic interpretation} of $G_1$ in $G_2$ is a monadic interpretation of the graph $G_1$ as the structure in the graph $G_2$ as the structure.

Suppose we have defined the class $\mathbf{Graph}_n$. Then the class $\mathbf{Tree}_{n+1}$ consists of all unfoldings of graphs from $\mathbf{Graph}_n$ from arbitrary fixed vertices.  The class $\mathbf{Graph}_{n+1}$ consists of all graphs that can be monadically interpreted in the graphs from $\mathbf{Tree}_{n+1}$.

For a finite set of colors $\mathbf{C}$ we denote by $\mathbf{C}^{-}$ the set of fresh colors $c^{-}$ for all $c\in\mathbf{C}$. We can extend every graph $G=(\mathbf{C},V,U)$ to the graph $\mathcal{R}(G)=(\mathbf{C}\sqcup\mathbf{C}^{-},V,U\sqcup U^{-})$, where $U^{-}=\{(v_2,c^{-},v_1)\mid (v_1,c,v_2)\in U\}$.  It is easy to see that if $G\in \mathbf{Graph}_n$ then $\mathcal{R}(G)\in\mathbf{Graph}_n$.

For a graph $(\mathbf{C},V,E)$, vertices $v_1,v_2\in V$, and word $c_1c_2\ldots c_k\in \mathbf{C}^{\star}$ we say that there is $c_1c_2\ldots c_k$-marked path if there ares edges:
 $$(w_0,c_1,w_1),(w_1,c_2,w_2),\ldots,(w_{k-1},c_k,w_k)\in E.$$

We call a graph $G$ {\it deterministic} if for each color and vertex there exists at most one edge of this color going from the vertex.   
\begin{theorem}\label{Inv_rat_theorem}(\cite{CarWoh03}) For every $G=(\mathbf{C},V,U)\in \mathbf{Graph}_n$ there exists deterministic tree $T=(\mathbf{D},V,E)\in \mathbf{Tree}_n$  and regular languages $L_c\subset(\mathbf{D}\sqcup\mathbf{D}^{-})^{\star}$, for $c\in \mathbf{C}$, such that for all $v_1,v_2\in V$ we have $(v_1,c, v_2)\in U$ iff there exists a path from $v_1$ to $v_2$ in $\mathcal{R}(T)$ marked by some $\alpha\in L_c$.
\end{theorem}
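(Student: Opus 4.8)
The plan is to unwind the definition of the hierarchy and then replace the monadic interpretation witnessing $G\in\mathbf{Graph}_n$ by an inverse rational mapping over a deterministic tree, exploiting two facts: that monadic interpretations compose, and that on a deterministic tree every monadically definable binary relation is an inverse rational relation over $\mathbf{D}\sqcup\mathbf{D}^{-}$. By the definition of $\mathbf{Graph}_n$ there is a tree $T'\in\mathbf{Tree}_n$ together with a monadic interpretation of $G$ in $T'$; let $g$ be the accompanying bijection, which I use to identify the domain of $T'$ with $V$. Then each target relation $\textsc{R}_c$ is monadically definable in $T'$, and it remains to push these definitions down to a deterministic tree and read off the languages $L_c$.

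First I would determinize. A tree $T'\in\mathbf{Tree}_n$ is an unfolding $\mathcal{U}(H,v_0)$ with $H\in\mathbf{Graph}_{n-1}$, and need not be deterministic, since a vertex may carry several outgoing edges of the same color. I would refine the color set, tagging each edge with enough data to single it out among the edges of its color leaving the same source, obtaining a deterministic tree $T=(\mathbf{D},V,E)$ that still lies in $\mathbf{Tree}_n$ and in which the original edge relations of $T'$ are monadically definable; equivalently, $T'$ is monadically interpretable in $T$. Because monadic interpretations compose, composing the interpretation of $G$ in $T'$ with that of $T'$ in $T$ yields a monadic interpretation of $G$ in the deterministic tree $T$, so each $\textsc{R}_c$ is defined in $T$ by some monadic formula $\varphi_c(x,y)$.

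The heart of the argument is the claim that, on the deterministic tree $T$, the relation defined by a monadic formula $\varphi(x,y)$ coincides with the set of pairs joined in $\mathcal{R}(T)$ by a path whose label lies in a fixed regular language over $\mathbf{D}\sqcup\mathbf{D}^{-}$; applying this to each $\varphi_c$ produces the $L_c$. Since $T$ is deterministic, each vertex is named by the word of edge colors read from the root, and for vertices $u,v$ the reduced path from $u$ to $v$ in $\mathcal{R}(T)$ climbs from $u$ to the least common ancestor along $\mathbf{D}^{-}$ and then descends to $v$ along $\mathbf{D}$. I would evaluate $\varphi$ by the composition method: the monadic type up to the quantifier rank of $\varphi$ of $T$ with the two marked vertices is determined by the finitely many types of the subtrees met along this spine, and these types compose under concatenation of the spine, so they can be tracked by a finite automaton reading the path label, whose accepted label set is the desired regular language. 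The main obstacle, and the step that forces the construction to stay inside level $n$, is that the automaton must read off, at each spine vertex, the monadic types of the off-spine subtrees; these are monadically definable unary predicates, so realizing them amounts to folding finitely much type information into the coloring $\mathbf{D}$ and verifying that the enriched deterministic tree remains in $\mathbf{Tree}_n$. This is precisely the closure of the tree class under monadically definable relabelings, which I would establish (or cite) together with the monadic-second-order/tree-automaton correspondence underlying the composition argument.
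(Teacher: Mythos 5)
The paper never proves this statement: Theorem \ref{Inv_rat_theorem} is imported wholesale from Carayol and W\"ohrle \cite{CarWoh03}, so there is no in-paper argument to compare yours against, and I can only measure your sketch against what a correct proof requires. Measured that way, it has two genuine gaps, and they sit exactly where the real work is. First, the determinization step fails as described. Trees in $\mathbf{Tree}_n$ need not be finitely branching, because graphs in $\mathbf{Graph}_{n-1}$ need not be: for example $(\omega,<)$ lies in $\mathbf{Graph}_1$ (interpret reachability along the one-loop ray), and its unfolding is a tree in $\mathbf{Tree}_2$ whose root has infinitely many children, all reached by edges of one color. No refinement of a finite color set can make such a tree deterministic, so ``tagging each edge with enough data to single it out'' is impossible in general. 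Passing from ``$G$ is interpretable in \emph{some} tree of $\mathbf{Tree}_n$'' to ``$G$ is interpretable in some \emph{deterministic} tree of $\mathbf{Tree}_n$'' cannot be done by locally repairing the given tree; in \cite{CarWoh03} it is obtained by an induction over the entire construction history of $G$, pushing inverse rational mappings through unfoldings level by level (equivalently, via the higher-order pushdown characterization), so that the witnessing tree is \emph{replaced}, not recolored.

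Second, the ``heart of the argument'' starts from a false fact: on a deterministic tree, a monadically definable binary relation need not be inverse rational over $\mathbf{D}\sqcup\mathbf{D}^{-}$. On the full binary tree the relation $\{(v,v)\mid \mbox{the depth of $v$ is even}\}$ is monadically definable, but it is not of the form ``some path from $v_1$ to $v_2$ in $\mathcal{R}(T)$ has label in $L$'' for any regular $L$: a finite automaton reading path labels cannot force the connecting walk to visit the root (the only landmark from which depth parity could be measured), and a pumping argument on walks from $0^{2n}$ along the leftmost branch produces off-diagonal pairs that would then also have to belong to the relation. Your later ``obstacle'' paragraph in effect corrects the claim to the true statement --- monadically definable relations on a deterministic tree become inverse rational after an MSO-definable enrichment of the coloring, by the composition/type argument you sketch, which is sound. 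But then the entire weight of the proof falls on the assertion that this enriched deterministic tree still lies in $\mathbf{Tree}_n$, i.e.\ on closure of level-$n$ deterministic trees under monadically definable relabelings, which you defer as something to ``establish (or cite).'' That closure is not bookkeeping: it is of essentially the same depth as the theorem itself, is proved in \cite{CarWoh03} inside the same level-by-level induction (using MSO-compatibility of unfolding), and citing it here would beg the question. So your plan correctly identifies the top layer of the argument, but the two steps you treat as routine --- determinization within a level, and relabeling-closure within a level --- are precisely the content of Carayol and W\"ohrle's proof.
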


Note that if some $G$ and $T$ are in a relation as in Theorem \ref{Inv_rat_theorem} then there is  a monadic interpretation of $G$ in $T$ \cite{CarWoh03}.

\section{Higher Order Pushdown Automatons}
One of alternative ways to represent Caucal hierarchy are configuration graphs of higher order pushdown automatons. There is a pumping lemma for graphs in Caucal hierarchy formulated in the terms of  this representation \cite{Par12}. We need this pumping lemma for the proofs in Section \ref{fast_growing_section}.   Higher order pushdown automatons were introduced by Maslov \cite{Mas74}

Suppose $\mathbf{A}$ is a finite set. We are going to define the notion of {\it higher order pushdown store} ({\it pds})  over an alphabet $\mathbf{A}$. A $0$-pds over $\mathbf{A}$ is just a symbol from $\mathbf{A}$. An $(n+1)$-pds is a finite sequence of $n$-pds's. 

Suppose $n>m$, $\alpha^n$ is an $n$-pds and $\alpha^m$ is an $m$-pds. We will define $n$-pds $\alpha^n{:}\alpha^m$. If $n=m+1$ then $\alpha^n{:}\alpha^m$ is the result of attaching $\alpha^m$ at the end of the sequence $\alpha^n$. If $n>m+1$ then  $\alpha^n{:}\alpha^m$ is equal to $\alpha^n{:}\beta^{m+1}$, where $\beta^{m+1}$ is $m+1$-pds that is an one element sequence containing $\alpha^m$.

We say that every $0$-pds is {\it proper}. We call an $n+1$-pds proper if it is a non-empty sequence of proper $n$-pds's. 

Note that for every $n$ and $m<n$, a proper $n$-pds $\alpha^n$ there is the unique representation of $\alpha^n$ in the form $\beta^n{:}\beta^m$, where $\beta^m$ is $m$-pds; it is easy to see that  here $\beta^m$ is always proper.

 Note can represent every proper $n$-pds $\alpha$ in the form $\beta^n{:}(\beta^{n-1}{:}(\ldots (\beta^1{:}\beta^0)\ldots))$ in the unique way. We say that $\beta^k{:}(\beta^{k-1}{:}(\ldots (\beta^1{:}\beta^0)\ldots))$ is the {\it topmost} $k$-pds of $\alpha$ .

There are the following operations on proper $n$-pds's over an alphabet $\mathbf{A}$:
\begin{enumerate}
\item Operation $\textsf{pop}^k$, where $0< k\le n$. This operation  maps a proper $n$-pds $\alpha^n{:}\alpha^{k-1}$ to $n$-pds $\alpha^n$.
\item Operation $\textsf{push}^k(a)$, where $0< k\le n$ and $a\in\mathbf{A}$. This operation transforms a proper $n$-pds by replacing its topmost $k$-pds $\alpha^k{:}\alpha^{k-1}$ with $(\alpha^k{:}\alpha^{k-1}){:}\beta^{k-1}$, where $\beta^{k-1}$ is $\alpha^{ k-1}$ with topmost $0$-pds replaced by $a$.
\end{enumerate}
We denote by $\mathbf{OP}^n(\mathbf{A})$ the set of all operations on $n$-pds's over alphabet $\mathbf{A}$ defined above.

A pushdown system of level $n$ is a tuple $\mathcal{A}=(\mathbf{A},\mathbf{S},s_I,Q,q_I,\Delta,\lambda)$:
\begin{enumerate}
\item $\mathbf{A}$ is a finite input alphabet,
\item $\mathbf{S}$ is a finite alphabet of stack symbols,
\item $s_I$ is an initial stack symbol,
\item $Q$ is a finite set of automaton states,
\item $q_I$ is an initial state,
\item $\Delta\subset Q\times \mathbf{S}\times Q\times\mathbf{OP}^n(\mathbf{S})$ is a set of transition instructions,
\item $\lambda\colon \Delta\to \mathbf{A}\sqcup \{\varepsilon\}$ is a labeling function.
\end{enumerate}

A {\it configuration} of pushdown system $\mathcal{A}$ is a pair $(q,\alpha)$, where $q\in Q$ and $\alpha$ is a proper $n$-pds. The {\it initial configuration} is the pair $(q_I,\varepsilon^n{:}(\varepsilon^{n-1}{:}(\ldots {:}(\varepsilon^1{:}s_I)\ldots )))$, where all $\varepsilon^{k}$ are empty $k$-pds's.

For configurations $w_1=(q_1,\alpha_1)$ and $w_2=(q_2,\alpha_2)$ we say that $\mathcal{A}$ {\it admits an one step transition} from $w_1$ to $w_2$ if the topmost $0$-pds of $\alpha_1$ is $s_1$, there exists $o\in\mathbf{OP}^n(\mathbf{S})$ such that the result of application of $o$ to $\alpha_1$ is $\alpha_2$, and $(q_1,s_1,q_2,o)\in\Delta$. Note that for a pair of configurations there exists at most one transition instruction $(q_1,s_1,q_2,o)$ with this property; we say that $(q_1,s_1,q_2,o)$ is the transition instruction that makes a switch from configuration $w_1$ to configuration $w_2$.

We say that there is an $\mathcal{A}$  {\it run} from configuration $w$ to configuration $w'$ if we ca find  configurations $u_0,u_1,\ldots, u_k$ such that $\mathcal{A}$ admits an one step transition from $u_i$ to $u_{i+1}$ for all $i<k$, $u_0=w$, and $u_k=w'$. 

A configuration $w$ is called {\it reachable} for $\mathcal{A}$ if there exists a run from the initial configuration of $\mathcal{A}$ to $w$. 

{\it Configuration graph} $C(\mathcal{A})=(\mathbf{A}\sqcup\{\varepsilon\},V_{C(\mathcal{A})},E_{C(\mathcal{A})})$ of $\mathcal{A}$ is the graph, where $V_{C(\mathcal{A})}$ is the set of all reachable configurations of $\mathcal{A}$ and $E_{C(\mathcal{A})}$ is the set of all triples $(w_1,\lambda(t),w_2)$, where $w_1,w_2$ are reachable configurations of $\mathcal{A}$, $\mathcal{A}$ admits an one step transition from $w_1$ to $w_2$, and $t$ is the transition instruction that makes a switch from configuration $w_1$ to configuration $w_2$.

Let us consider next only the case of  graphs $C(\mathcal{A})$ such that for every vertex either all outgoing edges are marked by $\varepsilon$ or all of them aren't marked by $\varepsilon$ (we will talk about $\varepsilon$-contractions only in the case of $\mathcal{A}$ with this property).
We define $G=(\mathbf{C},V,U)$  the {\it $\varepsilon$-contraction} of a configuration graph $C(\mathcal{A})$. $V$ consists of the initial configuration of $\mathcal{A}$ and all configurations that have incoming edges marked by non-$\varepsilon$ color. There is a $c$-marked edge in $G$ between $v_1,v_2\in V$ iff in $C(\mathcal{A})$ there is an $\varepsilon$-marked path from $v_1$ to some $v_3$ such that there is  $c$-marked edge from $v_3$ to $v_2$

We denote by $\mathbf{HOPDG}_n$  the collection of all graphs that are isomorphic to the $\varepsilon$-contraction of the configuration graph of some $n$-pds.

\begin{theorem}(\cite{CarWoh03})\label{HOPDG} For every  $n$ and every graph $G$ such that there is a vertex and paths from it  to any other vertex  we have $G\in\mathbf{Graph}_n$ iff $G\in\mathbf{HOPDG}_n$
\end{theorem}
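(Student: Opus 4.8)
The plan is to prove both inclusions by induction on $n$, with the entire argument resting on one structural observation about level-$(n+1)$ pushdown systems. The operations split into those of level $\le n$, which modify only the topmost $n$-pds while the rest of the stack stays frozen, and the two top-level operations: $\textsf{push}^{n+1}(a)$ saves a snapshot of the current topmost $n$-pds and begins work on a near-copy of it, while $\textsf{pop}^{n+1}$ discards the working copy and restores the most recently saved snapshot. Consequently the sequence of $n$-pds's stacked at level $n+1$ behaves exactly like a \emph{path of snapshots}: a $\textsf{push}^{n+1}$ descends to a child and a $\textsf{pop}^{n+1}$ returns to the parent. This is precisely the branching pattern of an unfolding, so I expect a faithful correspondence between reachable $(n+1)$-pds configurations and vertices of the unfolding $\mathcal{U}(H,v_0)$ of the level-$n$ configuration graph $H$ recording the level-$\le n$ behaviour.

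For the base case $n=0$ the argument reduces to the routine fact that $\varepsilon$-contractions of level-$0$ configuration graphs are exactly the finite rooted graphs, which is $\mathbf{Graph}_0$ restricted to rooted graphs. The reachability hypothesis is used throughout: configuration graphs contain only configurations reachable from the initial one, so they are automatically rooted, which is why the theorem is stated for graphs with a vertex reaching all others.

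For the inclusion $\mathbf{HOPDG}_{n+1}\subseteq\mathbf{Graph}_{n+1}$ I would take an $(n+1)$-pds system $\mathcal{A}$, isolate its level-$\le n$ transitions together with the top symbol each reads, and assemble from them a level-$n$ system whose $\varepsilon$-contraction $H$ lies in $\mathbf{Graph}_n$ by the induction hypothesis. The unfolding $\mathcal{U}(H,v_0)\in\mathbf{Tree}_{n+1}$ then carries along its branches exactly the snapshot-stacks produced by iterating $\textsf{push}^{n+1}$, so the reachable configurations of $\mathcal{A}$ embed into this tree. It then remains to recover the edges of the $\varepsilon$-contraction $G$ by a monadic interpretation in the tree; the point is that ``the topmost $n$-pds equals the snapshot that an unmatched $\textsf{push}^{n+1}$ would restore'' is an MSO-definable relation on the unfolding, amounting to matching pushes with pops along a branch, so $G\in\mathbf{Graph}_{n+1}$. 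For the converse $\mathbf{Graph}_{n+1}\subseteq\mathbf{HOPDG}_{n+1}$ I would start from a rooted $G\in\mathbf{Graph}_{n+1}$, which by definition is monadically interpretable in some $T=\mathcal{U}(H,v_0)\in\mathbf{Tree}_{n+1}$ with $H\in\mathbf{Graph}_n$. By the induction hypothesis $H$ is the $\varepsilon$-contraction of some $n$-pds system $\mathcal{B}$, and I would build an $(n+1)$-pds system $\mathcal{A}$ that simulates $\mathcal{B}$ on its topmost $n$-pds and uses $\textsf{push}^{n+1}$/$\textsf{pop}^{n+1}$ to descend to children and ascend to parents of $T$, so that its configuration graph realizes $T$. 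To pass from $T$ to $G$ I would invoke Theorem \ref{Inv_rat_theorem}: the monadic interpretation is replaced by regular languages $L_c$ of paths in $\mathcal{R}(T)$, and each such regular condition is compiled into the finite control of $\mathcal{A}$ by $\varepsilon$-labeled transitions that traverse the corresponding path-recognizing automaton (the backward colours of $\mathcal{R}$ being simulated by $\textsf{pop}^{n+1}$ moves); the genuine colour $c$ is emitted at the end, and the $\varepsilon$-contraction collapses the auxiliary path to a single $c$-edge, yielding $G\in\mathbf{HOPDG}_{n+1}$.

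The hard part, in both directions, is the precise bridge between MSO-definability over the deterministic unfolding and regular, automaton-implementable path conditions: expressing the matching of $\textsf{push}^{n+1}$ with $\textsf{pop}^{n+1}$ (hence the identity of restored snapshots) in MSO in the first direction, and compiling an arbitrary monadic interpretation into finitely many $\varepsilon$-moves in the second. Both hinge on Theorem \ref{Inv_rat_theorem} together with the Rabin-style equivalence of MSO with tree automata on these deterministic trees, so the main technical burden is verifying that the snapshot-stack encoding is faithful enough that these regular conditions survive the $\varepsilon$-contraction unchanged.
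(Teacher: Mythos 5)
First, a point of order: the paper contains no proof of Theorem \ref{HOPDG} --- it is imported wholesale from \cite{CarWoh03} and used as a black box, so there is no in-paper argument to compare yours against; what follows measures your sketch against the proof in the cited source. Your skeleton (induction on the level, reading $\textsf{push}^{n+1}$/$\textsf{pop}^{n+1}$ as descending/ascending in a tree of snapshots, Theorem \ref{Inv_rat_theorem} as the bridge between MSO and path languages, compilation of the regular languages $L_c$ into $\varepsilon$-transitions) is indeed the shape of that proof, and your direction $\mathbf{Graph}_{n+1}\subseteq\mathbf{HOPDG}_{n+1}$ is essentially the standard simulation, modulo routine repairs: the saved snapshots record neither the control states nor the incoming edge colours of ancestor vertices, so these must be coded into stack symbols before each $\textsf{push}^{n+1}$; $H$ must be replaced by its reachable (hence deterministic) part before invoking the induction hypothesis; and under this paper's literal definitions $\mathbf{OP}^0(\mathbf{S})$ is empty, so your base case fails as stated and the induction must either amend the level-$0$ conventions or start at $n=1$.

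The genuine gap is in the converse direction $\mathbf{HOPDG}_{n+1}\subseteq\mathbf{Graph}_{n+1}$. You claim the reachable configurations of $\mathcal{A}$ ``embed into'' the unfolding $\mathcal{U}(H,v_0)$, with push--pop matching along a branch as the only MSO obstacle. The correspondence actually runs the other way and is many-to-one: a vertex of the unfolding is an entire run, and distinct runs reach the same configuration. This cannot be normalized away naively, for two reasons: a balanced $\textsf{push}^{n+1}\cdots\textsf{pop}^{n+1}$ excursion restores the stack exactly while changing only the control state (so excursions are semantically essential), and different sequences of level-$\le n$ operations can build the same $n$-pds (with non-local cancellations such as $\textsf{push}^{k}$ followed by a balanced block and $\textsf{pop}^{k}$). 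Consequently an interpretation must select one canonical run per configuration as a definable set and then define the edge relations between canonical representatives; but the run witnessing an edge out of a canonical vertex lands, in general, on a \emph{non-canonical} history of the target configuration, so your scheme needs an MSO formula saying that two vertices on \emph{different branches} denote the same configuration. Such cross-branch comparison is not MSO-expressible on trees: a tree automaton sees the two branches only through boundedly many states at their meeting point, while there are unboundedly many distinct stacks, so by pigeonhole any candidate formula confuses two different stacks. Bracket-matching along a single branch, which is all your proposal offers, does not touch this. The cited proof avoids the quotient altogether by choosing a different tree and a different injection: configurations are mapped to \emph{canonical reduced sequences of stack operations}, whose uniqueness makes the map injective by design and whose one-step update is branch-local, with reachability (for the $\varepsilon$-contraction) then definable by set quantification. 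Establishing that such canonical forms exist, are MSO-definable, and are in bijection with higher-order stacks is the real content of the theorem, and it is absent from your proposal.
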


P.~Parys\cite{Par12} have proved a variant of pumping lemma for graphs in $\mathbf{HOPDG}$.

Functions $\beth_n\colon \omega\to \omega$ are defined as the following:
\begin{itemize}
\item $\beth_0(x)=x$,
\item $\beth_{n+1}(x)=2^{\beth_n(x)}$.
\end{itemize}

\begin{theorem}(\cite{Par12}) \label{pumping_lemma}
Suppose $\mathcal{A}$ is $n$-pds with input alphabet $\mathbf{A}$ and $L\subset \mathbf{A}^{\star}$ is a regular language. Let $G$ be the $\varepsilon$-contraction of  the configuration graph of $\mathcal{A}$. Assume that $G$ is finitely branching.  Assume that in $G$ there exists a path of length $m$ from the initial configuration to some configuration $w$. Assume that for some $\alpha\in L$, $|\alpha|\ge\beth_{n-1}((m+1)C_{\mathcal{A},L})$ there is path in $G$ from $w$ marked by $\alpha$; here $C_{\mathcal{A},L}$ is a constant which depends on $\mathcal{A}$ and $L$. Then there are infinitely many paths in $G$, which start in $w$  and are marked by some $\beta\in L$.
\end{theorem}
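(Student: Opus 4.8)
The plan is to absorb the regular language $L$ into the finite control and then prove a purely internal pumpability property of runs by induction on the level $n$. Fix a deterministic finite automaton $M=(S,s_0,\delta,F)$ recognizing $L$ and form the product system $\mathcal{B}$ whose states are pairs $(q,s)\in Q\times S$: the $S$-component is updated by $\delta$ on each emitted non-$\varepsilon$ label and left unchanged on $\varepsilon$-steps, while the stack operations and the stack alphabet $\mathbf{S}$ are exactly those of $\mathcal{A}$. Since $\mathcal{B}$ has the same level $n$ and the same stack alphabet as $\mathcal{A}$, differing only in the size of its finite control, the relevant threshold keeps the tower height $n-1$, and the constant $C_{\mathcal{A},L}$ will emerge from $|Q|\cdot|S|$ and $|\mathbf{S}|$. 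A path in $G$ from $w$ marked by a word $\alpha\in L$ corresponds precisely to a run of the $\varepsilon$-contraction of the configuration graph of $\mathcal{B}$ that starts in the configuration of $w$ paired with $s_0$ and ends in a configuration whose $S$-component lies in $F$. Thus it suffices to show: if $\mathcal{B}$ admits one such accepting run of length exceeding $\beth_{n-1}((m+1)C_{\mathcal{A},L})$ from a configuration reachable within $m$ steps, then it admits infinitely many pairwise distinct accepting runs.

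The core is an induction on $n$ establishing that a sufficiently long run must contain a repeatable segment. For $n=1$, where $\beth_0$ is the identity, this is the classical pumping phenomenon for pushdown automata: a run longer than a linear-in-$m$ bound either drives the single stack arbitrarily high or revisits some control state with an identical topmost symbol at a non-increasing stack level, yielding a loop that returns the stack to a compatible configuration and can therefore be iterated. For the inductive step I would organize a level-$(n+1)$ run around its topmost copy operations $\textsf{push}^{n+1}(a)$ and the matching $\textsf{pop}^{n+1}$ steps: between consecutive operations on the topmost $n$-pds the automaton behaves as a level-$n$ system on a fixed $n$-pds. A run of length exceeding $\beth_n((m+1)C_{\mathcal{A},L})$ forces, by the growth of the tower, either a level-$n$ subcomputation long enough to invoke the induction hypothesis, or so many nested copy operations that two of them repeat the same link profile, namely the state, the topmost symbol, and the relevant portion of the higher-order stack structure. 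In the latter case the segment between the two repetitions returns the stack to a configuration compatible with continuing the run, and may therefore be inserted any number of times.

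Once a repeatable segment $\sigma$ is isolated, I would fix the run prefix that reaches $w$, the segment $\sigma$, and a run suffix that reaches an accepting configuration. Iterating $\sigma$ a number $i$ of times produces, for each $i$, a genuinely different run whose length strictly grows with $i$, and each such run is accepting: because $\sigma$ was chosen to return the $M$-component to the same state its repetition preserves membership of the emitted word in $L$, and because $\sigma$ returns the stack to a compatible configuration the suffix run can still be appended. Pulling the corresponding contracted edges back to $G$ then yields infinitely many distinct paths starting at $w$ and marked by words of $L$.

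The main obstacle is the combinatorial heart of the inductive step: defining the correct notion of two configurations being compatible for pumping at level $n+1$, strong enough that a matched pair really can be iterated without rendering the stack improper or stalling the run, yet coarse enough that only finitely many profiles exist, and then proving that the iterated-exponential threshold $\beth_{n-1}$ is exactly what forces such a repetition. The nesting of $\textsf{push}^k/\textsf{pop}^k$ operations and the need to keep the pumped runs inside the reachable, finitely branching graph $G$ demand careful bookkeeping, and it is precisely the height-$(n-1)$ tower that quantifies how the linear bound in $m$ blows up through $n-1$ levels of copying.
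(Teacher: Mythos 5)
First, a framing remark: the paper does not prove this statement at all --- it is Parys's pumping lemma, imported with the citation [Par12] --- so your attempt is being measured against that paper's long and technical proof, not against anything in the present text. Your preliminary reduction (absorbing $L$ into the finite control via a product with a DFA, so that the level and stack alphabet are unchanged and only the constant $C_{\mathcal{A},L}$ grows) is sound and standard, and the base case $n=1$ is indeed classical. But the proposal has a genuine gap, and you essentially name it yourself in the last paragraph: the entire content of the theorem is the inductive step, namely a notion of ``profile'' of a position in a level-$(n+1)$ run that is (i) finite-valued, so that a long run must repeat one, and (ii) strong enough that the segment between two occurrences of the same profile can be iterated. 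Your candidate profile --- ``the state, the topmost symbol, and the relevant portion of the higher-order stack structure'' --- does not work as stated: because $\textsf{push}^{k}$ with $k\ge 2$ copies entire substacks, the continuation of a run after the second occurrence of a repeated (state, top-symbol) pair may later $\textsf{pop}$ into deep copied material created before or between the two occurrences, and that material is of unbounded size. There is no obvious finite abstraction of ``the relevant portion'' that simultaneously bounds the number of profiles and guarantees that cut-and-iterate produces legal runs. This is precisely why the higher-order pumping lemma was a publication-worthy open problem: the naive generalization of level-1 pumping fails, and Parys's actual proof proceeds through substantially heavier machinery (a careful analysis of runs via distinguished positions and types of stacks), not through the induction you sketch. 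Declaring the missing combinatorics to be ``the main obstacle'' is accurate, but it means the proof is not there.

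Two further points would need repair even if a repeatable segment $\sigma$ were produced. First, iterating $\sigma$ must yield infinitely many distinct paths of the $\varepsilon$-contraction $G$, not merely distinct runs of the product system $\mathcal{B}$: a repeatable segment consisting only of $\varepsilon$-steps that returns to the same configuration contracts to nothing, so all its iterates give the same path of $G$; you must additionally guarantee that $\sigma$ emits at least one letter of $\mathbf{A}$. Second, your argument never uses the hypothesis that $G$ is finitely branching, which is an essential assumption in [Par12] (the $\varepsilon$-contraction of a higher-order configuration graph can have infinite degree, and the bound $\beth_{n-1}((m+1)C_{\mathcal{A},L})$ is stated in terms of the length $m$ of a path in $G$, which controls the underlying runs only under finite branching). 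A proof sketch that nowhere invokes a hypothesis known to be essential is a strong signal that the sketch, as written, proves something false; here it is symptomatic of the fact that the inductive step has not actually been carried out.
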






\section{Well-Orderings and Cofinal Sequences}
We are interested in graphs $G=(\mathbf{C},V,U)$ such that one of $R_c$, for $c\in\mathbf{C}$ is a strict well-ordering of $V$. We call such graphs {\it well-orderings for color $c$}.

 L.~Braud have proved that there are graphs in Caucal hierarchy with well-ordering of order type $\alpha$, for every $\alpha<\varepsilon_0$ \cite{Bra09}. L.~Braud and A.~Carayol have proved that every graph with well-ordering in Caucal hierarchy has order type less that $\varepsilon_0$ \cite{BraCar10}. More precisely, those works have shown that for a class $\mathbf{Graph}_n$ the exact non-reachable upper bound of well-orderings order types is $\omega_{n+1}$, where $\omega_0=1$ and $\omega_{k+1}=\omega^{\omega_k}$.

 Suppose $(A,R)$ is a strict well-ordering. We denote by $A^{\mathrm{lim}}$ the set of all non-zero limit points of $(A,R)$, i.e. $A^{\mathrm{lim}}=\{x\in A\mid x=\sup_R(\{y\in A\mid yR x\})\}\setminus \{\inf_R(A)\}$. A {\it system of cofinal sequences} is a function $s\colon A^{\mathrm{lim}}\times \omega  \to A$ such that for all $x\in A^{\mathrm{lim}}$ and $n\in \omega$ we have $s(x,n)R x$ and $x=\sup_R(\{s(x,m)\mid m\in \omega\})$. For the purposes of defining cofinal sequences in the terms of monadic theories of graphs we further consider only strictly monotone cofinal sequences, i.e. for every $x\in A^{\lim}$ if $n<m$ then $s(x,n)R s(x,m)$.  

A system of cofinal sequences $s\colon A^{\mathrm{lim}}\times \omega  \to A$  is said to have {\it Bachmann property} \cite{Bach67} if for any $x,y\in A^{\mathrm{lim}}$ and $n\in \omega$ such that $s(x,n)<y\le s(x,n+1)$ we have $s(x,n)\le s(y,0)$.

There is an alternative characterization of systems of cofinal sequences with Bachmann property (we refer to \cite[\S3.3]{Ros84} for details). System of cofinal sequences gives step-down relation $\prec^s$ on $A$. The relation $\prec^s$ is the transitive closure of $x \prec^s x+1$ and $s(x,0)\prec^s x$. A system of strictly monotone cofinal sequences $s\colon A^{\mathrm{lim}}\times \omega  \to A$ is said to be Schmidt-coherent \cite{Sch77} if for all $x\in A^{\mathrm{lim}}$ and $n\in \omega$ we have $s(x,n)\prec^ss(x,n+1)$.    

Suppose $G=(\mathbf{C},V,U)$ is a well-ordering for color $c$ and $d\in\mathbf{C}$. We call say that $d$ {\it gives a system of cofinal sequences for $R_c$} if there is a system of strictly monotone cofinal sequences $s_d$ for $(V,R_c)$ such that for all $v_1,v_2\in V$
$$v_1 R_d v_2\iff v_2\textrm{ is non-zero limit of $R_c$ and } \exists n ( s_d(v_2,n)=v_1) .$$
Note that there exists at most one $s_d$ with described property and thus it is determined by $R_d$.

For a well-ordering $(A,R)$ with a fixed system of cofinal sequences $s$ the {\it fast-growing hierarchy} is a family of functions $F_a\colon \omega \to \omega$ indexed by $a\in A$:
\begin{enumerate}
\item $F^s_{0}(x)=x+1$,
\item $F^s_{b+1}(x)=\underbrace{F_{b}(F_{b}(\ldots F_{b}}\limits_{\mbox{$x$-times}}(x)\ldots))$,
\item $F^s_l(x)=F_{s(l,x)}(x)$, for every $l\in A^{\mathrm{lim}}$.
\end{enumerate}

\section{Defining Cofinal Sequences in Caucal hierarchy}
In the present section we will prove that every $G\in \mathbf{Graph}_n$ that is a well-ordering for some color $c$ could be extended to $G'\in \mathbf{Graph}_n$ by adding color $d$ and some edges marked by $d$ such that $d$ gives a system of cofinal sequences for $R_c$. Note that equivalently Theorem \ref{adding_cofinals} and Theorem \ref{making_Bachmann} could be formulated in the terms of monadic definable relations on deterministic trees from $\mathbf{Tree}_n$.

We consider finite automatons as tuples $(\mathbf{A},Q,\Delta)$, where $\mathbf{A}$ is input alphabet, $Q$ is the set of states, and $\Delta\subset Q\times \mathbf{A}\times Q$ is the set of instructions. 

 Suppose we have a tree $T=(\mathbf{D},V,E)$, and a finite automaton $\mathcal{A}$ with input language $\mathbf{D}\sqcup\mathbf{D}^{-}$ and the set of states $Q_{\mathcal{A}}$. We call the the set $\mathbf{Tp}_{\mathcal{A}}=\mathcal{P}(Q_{\mathcal{A}}\times Q_{\mathcal{A}})\times \mathcal{P}(Q_{\mathcal{A}}\times Q_{\mathcal{A}})$ the set of {\it vertex pair types}.  For $q_1,q_2\in Q_{\mathcal{A}}$ and $v_1,v_2\in V$ we say that $\mathcal{A}$ can {\it switch} from $q_1$ to $q_2$ on a path from $v_1$ to $v_2$ if there exists a path in $\mathcal{R}(T)$ from $v_1$ to $v_2$ marked by $\alpha\in (\mathbf{D}\sqcup\mathbf{D}^{-})^{\star}$  such that $\mathcal{A}$ can switch from $q_1$ to $q_2$ on input $\alpha$. We say that $(v_1,v_2)\in V\times V$ has a type $(A,B)$ if for a  $(q_1,q_2)\in Q_{\mathcal{B}}\times Q_{\mathcal{B}}$ we have 
\begin{enumerate}
\item $(q_1,q_2)\in A$ iff  $\mathcal{A}$ can switch from $q_1$ to $q_2$ on a path from $v_1$ to $v_2$,
\item $(q_1,q_2)\in B$  iff  $\mathcal{A}$ can switch from $q_1$ to $q_2$ on a path from $v_2$ to $v_1$.
\end{enumerate}

\begin{lemma} Suppose $\mathbf{D}$ is a finite set of colors and $\mathcal{A}$ is a finite automaton with input alphabet $\mathbf{D}\sqcup\mathbf{D}^{-}$. Then for every vertex pair types $t\in\mathbf{Tp}_{\mathcal{A}}$  there exists a monadic formulas $\varphi_t(x,y)$  such that for every tree $T=(\mathbf{D},V,E)$ and $v_1,v_2\in V$ we have $T\models_{\mathrm{MSO}} \varphi_t(v_1,v_2)$ iff $(v_1,v_2)$ have the type $t$.  
\end{lemma}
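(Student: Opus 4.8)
The plan is to reduce membership of a pair of vertices in a prescribed type to a Boolean combination of finitely many ``can-switch'' relations, each of which is a regular path relation on $\mathcal R(T)$, and then to obtain monadic definability of each such relation from the monadic interpretation that accompanies Theorem~\ref{Inv_rat_theorem}. First I would fix, for each ordered pair of states $(q_1,q_2)\in Q_{\mathcal A}\times Q_{\mathcal A}$, the language
$$L_{q_1,q_2}=\{\alpha\in(\mathbf D\sqcup\mathbf D^{-})^{\star}\mid \mathcal A\text{ can switch from }q_1\text{ to }q_2\text{ on input }\alpha\}.$$
Since $\mathcal A$ is a finite automaton, $L_{q_1,q_2}$ is exactly the language recognised by $\mathcal A$ when one declares $q_1$ initial and $q_2$ the only accepting state, hence it is regular, and there are only finitely many such languages. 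By the very definition of switching along a path, $\mathcal A$ can switch from $q_1$ to $q_2$ on a path from $v_1$ to $v_2$ precisely when there is a path from $v_1$ to $v_2$ in $\mathcal R(T)$ marked by some $\alpha\in L_{q_1,q_2}$.

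Next I would produce, for each regular language $L_{q_1,q_2}$, a single monadic formula $\psi_{q_1,q_2}(x,y)$, \emph{uniform in} $T$, asserting ``there is a path from $x$ to $y$ in $\mathcal R(T)$ marked by some word of $L_{q_1,q_2}$''. This is exactly the content of the monadic interpretation associated with Theorem~\ref{Inv_rat_theorem} (\cite{CarWoh03}): regular reachability in $\mathcal R(T)$ is monadically definable, and the defining formula depends only on the automaton for the language and on the finite colour set $\mathbf D$, not on the particular tree, which is the uniformity the lemma demands. Thus $T\models_{\mathrm{MSO}}\psi_{q_1,q_2}(v_1,v_2)$ iff $\mathcal A$ can switch from $q_1$ to $q_2$ on a path from $v_1$ to $v_2$.

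Finally I would assemble the type formula. For a type $t=(A,B)\in\mathbf{Tp}_{\mathcal A}$ put
$$\varphi_t(x,y)=\bigwedge_{(q_1,q_2)\in A}\psi_{q_1,q_2}(x,y)\wedge\bigwedge_{(q_1,q_2)\notin A}\neg\psi_{q_1,q_2}(x,y)\wedge\bigwedge_{(q_1,q_2)\in B}\psi_{q_1,q_2}(y,x)\wedge\bigwedge_{(q_1,q_2)\notin B}\neg\psi_{q_1,q_2}(y,x).$$
Because $Q_{\mathcal A}$ is finite, all four conjunctions are finite, so $\varphi_t$ is a genuine monadic formula. Comparing with the two clauses defining ``$(v_1,v_2)$ has type $(A,B)$'' (switching from $v_1$ to $v_2$ records exactly $A$, switching from $v_2$ to $v_1$ records exactly $B$), one reads off that $T\models_{\mathrm{MSO}}\varphi_t(v_1,v_2)$ holds iff $(v_1,v_2)$ has type $t$; the negated conjuncts are what upgrade each ``reachable $\Rightarrow$ in the set'' into the required ``reachable $\Leftrightarrow$ in the set''. (I note in passing that the statement's $Q_{\mathcal B}\times Q_{\mathcal B}$ should read $Q_{\mathcal A}\times Q_{\mathcal A}$.)

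The entire nontrivial content sits in the second step, the monadic definability of regular reachability in $\mathcal R(T)$, so that is where I expect the main obstacle to lie. If one could not simply cite it, I would prove it by the two-way-automaton-on-a-tree argument: introduce the subtree-confined switching summaries $\mathrm{Conf}(v)\subseteq Q_{\mathcal A}\times Q_{\mathcal A}$ (switches realisable by walks from $v$ to $v$ that stay below $v$) together with the complementary ``context'' summaries (walks from $v$ to $v$ that first leave the subtree below $v$); both satisfy finite local fixed-point equations over $\mathcal P(Q_{\mathcal A}\times Q_{\mathcal A})$, are therefore regular labellings of $T$ and hence monadically definable, after which an $L_{q_1,q_2}$-marked path from $x$ to $y$ decomposes along the unique simple tree-path $x\leadsto y$ into steps interleaved with excursions summarised by these relations, a composition expressible in MSO once the simple path and the least common ancestor are named. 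The delicate points are that $T$ may be infinitely deep and infinitely branching, so there is no leaf-based induction, and that the switching summaries are \emph{least} rather than greatest fixed points; reconciling these is exactly where the two-way-to-one-way simulation, packaged in the cited interpretation result, does the real work.
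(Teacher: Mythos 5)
Your assembly step (each type as a positive-and-negative Boolean combination of the finitely many can-switch relations) coincides with the paper's own reduction, and your remark that $Q_{\mathcal{B}}$ in the statement should read $Q_{\mathcal{A}}$ is correct. The genuine divergence is in the core step, the MSO-definability of the can-switch relations, and here the paper is both simpler and more self-contained than your proposal. It does not invoke the interpretation result attached to Theorem~\ref{Inv_rat_theorem}; instead it proves definability outright by a least-fixed-point argument: fixing a start state $q$ and enumerating $Q_{\mathcal{A}}$ as $q^1,\ldots,q^k$, the tuple $(R_1,\ldots,R_k)$, where $R_i$ is the set of vertices $v'$ such that $\mathcal{A}$ can switch from $q$ to $q^i$ on a path from $v$ to $v'$, is the componentwise-least tuple $(S_1,\ldots,S_k)$ that puts $v$ into the component of $q$ and is closed under the one-letter transitions of $\mathcal{A}$ along single edges of $\mathcal{R}(T)$; ``being the least closed tuple'' is directly expressible in MSO (write the closure condition, then universally quantify over all closed tuples and demand componentwise inclusion). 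This is uniform in $T$ by construction, takes a few lines, and never uses the tree structure at all --- it is valid in an arbitrary graph --- so the simple-path/least-common-ancestor decomposition and the subtree/context summaries of your fallback are unnecessary. By comparison, your primary route has a soft spot: the remark after Theorem~\ref{Inv_rat_theorem} only asserts, for the particular pairs $(G,T)$ produced by that theorem, the existence of \emph{some} interpretation, whereas the lemma needs a single formula working for \emph{every} tree over $\mathbf{D}$ (not necessarily deterministic, not necessarily in any $\mathbf{Tree}_n$); the uniformity you assert is a claim about the internals of the cited construction, not something readable from its statement. And your fallback sketch concedes its own crux --- turning the local fixed-point equations for the summaries into MSO over a possibly infinitely deep, infinitely branching tree --- which is exactly the point the paper's set-quantifier trick settles in one stroke. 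If you replace both the citation and the two-way-automaton sketch by that trick, your proof becomes self-contained and is then essentially the paper's.
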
 
\begin{proof}Suppose $k=|Q_{\mathcal{A}}|$ and we have enumerated all the elements of $Q_{\mathcal{A}}$. Clearly it is enough to prove that for any state $q\in Q_{\mathcal{A}}$ there exists a monadic formula $\psi_{q}(x,Y_1,\ldots, Y_k)$ such that for every tree $T=(\mathbf{D},V,E)$, $v\in V$, and $(S_1,\ldots,S_k)\in (\mathcal{P}(V))^k$ we have: $T\models_{\mathrm{MSO}} \psi_q(v,S_1,\ldots,S_k)$ iff, for all $1\le i \le k$, $S_i$ is the set of all vertices $v'$ such that $\mathcal{A}$ can switch from the state $q$ to $i$-th state on path from $v$ to $v'$.

 Suppose  $T=(\mathbf{D},V,E)$ is a tree, $v_1\in V$, and $q\in Q_{\mathcal{A}}$. Let us consider the tuple $(R_1,\ldots,R_k)\in (\mathcal{P}(V))^k$, where $R_i$ is the set of all vertices $v'$ such that $\mathcal{A}$ can switch from the state $q$ to the $i$-th state. There is a natural partial order on $(\mathcal{P}(V))^k$ by element-wise inclusion.  Note that $(R_1,\ldots,R_k)$ is the least element $(S_1,\ldots,S_k)\in (\mathcal{P}(V))^k$ such that, $v\in S_i$ if $q$ is $i$-th state and for all $1\le i,j\le k$,  $v'\in S_i$, and edge $(v',a,v'')$ from $\mathcal{R}(T)$, if $\mathcal{A}$ can switch from $i$-th state to $j$-th state on input $a$ then $v'\in S_j$. The late characterization of  $(R_1,\ldots,R_k)$ clearly gives us the required monadic formula $\psi_{q}$.
\end{proof}

Obviously the following two lemmas holds:
\begin{lemma} \label{type_calculation1} Suppose $\mathbf{D}$ is a finite set of colors and $\mathcal{A}$ is a finite automaton with input alphabet $\mathbf{D}\sqcup\mathbf{D}^{-}$. Then there exists a function $f\colon \mathbf{Tp}_{\mathcal{A}}\times  \mathbf{Tp}_{\mathcal{A}} \to \mathbf{Tp}_{\mathcal{A}}$  with the following property: Suppose  $T=(\mathbf{D},V,E)$ is a tree, $v_1,v_2,v_3$ are  vertices  such that $v_1$ is in the cone of $v_2$ and $v_3$ is not in the cone of $v_2$. Then the type $t_3$ of $(v_1,v_3)$ is equal to $f(t_1,t_2)$, where  $t_1$ is the type of $(v_1,v_2)$, $t_2$ is the type of $(v_2,u_3)$.
\end{lemma}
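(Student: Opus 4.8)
The plan is to take $f$ to be relational composition in each of the two coordinates, and to reduce correctness to a single structural property of trees. For relations $P,S\subseteq Q_{\mathcal{A}}\times Q_{\mathcal{A}}$ write $P\circ S=\{(q,q')\mid \exists q''\ (q,q'')\in P\text{ and }(q'',q')\in S\}$, and define
$$f((A_1,B_1),(A_2,B_2))=(A_1\circ A_2,\ B_2\circ B_1).$$
The only geometric input I would use is that in a tree $T$ the vertex $v_2$ is a cut vertex: its cone is joined to the rest of $T$ solely through the single edge between $v_2$ and its parent. Consequently, in $\mathcal{R}(T)$ every walk that starts inside the cone of $v_2$ and ends outside it (or vice versa) must visit $v_2$.

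First I would prove the forward inclusion for the $A$-coordinate. Suppose $\mathcal{A}$ switches from $q$ to $q'$ on some path $\pi$ from $v_1$ to $v_3$ in $\mathcal{R}(T)$. By the cut-vertex property $\pi$ passes through $v_2$; pick any occurrence of $v_2$ on $\pi$ and split $\pi$ there into an initial segment $\pi_1$ from $v_1$ to $v_2$, marked by some $\alpha_1$, and a final segment $\pi_2$ from $v_2$ to $v_3$, marked by some $\alpha_2$. Since a run of a finite automaton on the concatenated word $\alpha_1\alpha_2$ factors through the state $q''$ reached after reading $\alpha_1$, we obtain $(q,q'')\in A_1$ and $(q'',q')\in A_2$, i.e. $(q,q')\in A_1\circ A_2$. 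For the converse, given witnessing paths $\pi_1$ from $v_1$ to $v_2$ and $\pi_2$ from $v_2$ to $v_3$ realizing $(q,q'')\in A_1$ and $(q'',q')\in A_2$, their concatenation $\pi_1\pi_2$ is again a walk in $\mathcal{R}(T)$ from $v_1$ to $v_3$ on which $\mathcal{A}$ switches from $q$ to $q'$; crucially, no restriction of $\pi_1,\pi_2$ to the cone is needed, because the types $t_1$ and $t_2$ already quantify over all paths. This yields $A_3=A_1\circ A_2$.

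The $B$-coordinate is completely symmetric: a path from $v_3$ to $v_1$ again crosses $v_2$ and splits into a segment from $v_3$ to $v_2$ (whose switches are recorded in $B_2$) followed by a segment from $v_2$ to $v_1$ (recorded in $B_1$), giving $B_3=B_2\circ B_1$. I do not expect any genuine difficulty in the automaton bookkeeping; the one point that needs care — and the step I regard as the heart of the argument — is the cut-vertex claim, since walks in $\mathcal{R}(T)$ are not simple and may re-enter and leave the cone of $v_2$ many times. The resolution is that every such crossing is forced through the unique boundary edge at $v_2$, so splitting at a single occurrence of $v_2$ always suffices, and because $A_1,A_2,B_1,B_2$ range over \emph{all} realizing paths, the repeated interleaving of cone-internal and cone-external excursions is automatically absorbed into the composed relations.
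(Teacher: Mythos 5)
Your proof is correct. The paper in fact states this lemma without any proof (it is prefaced by ``Obviously the following two lemmas holds''), and your argument --- taking $f$ to be relational composition in each coordinate, justified by the observation that the single edge between $v_2$ and its parent is the only connection in $\mathcal{R}(T)$ between the cone of $v_2$ and the rest of the tree, so every walk from $v_1$ to $v_3$ (or back) must visit $v_2$ and can be split there --- is exactly the standard argument the paper leaves implicit.
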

\begin{lemma} \label{type_calculation2} Suppose $\mathbf{D}$ is finite set of colors and $\mathcal{A}$ is a finite automaton with input alphabet $\mathbf{D}\sqcup\mathbf{D}^{-}$. Then there exists a function $f\colon \mathbf{Tp}_{\mathcal{A}}\times \mathbf{Tp}_{\mathcal{A}} \times \mathbf{Tp}_{\mathcal{A}} \to \mathbf{Tp}_{\mathcal{A}}$  with the following property: Suppose  $T=(\mathbf{D},V,E)$ is a tree, $v_1,v_2$ are  vertices  with non-intersecting cones,  $u_1$ is a vertex from the cone of $v_1$ and  $u_2$ is a vertex from the cone of $v_2$. Then the type $t_4$ of $(u_1,u_2)$ is equal to $f(t_1,t_2,t_3)$, where  $t_1$ is the type of $(v_1,v_2)$, $t_2$ is the type of $(v_1,u_1)$, and $t_3$ is the type of $(v_2,u_2)$.
\end{lemma}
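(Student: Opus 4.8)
The plan is to reduce the lemma to a relational \emph{composition} identity: express the switching behaviour of $\mathcal{A}$ between $u_1$ and $u_2$ in terms of its switching behaviour recorded in $t_1,t_2,t_3$, and then read off $f$ from that identity. Write each type as a pair $(A,B)$ as in the definition, so that for a pair $(w_1,w_2)$ the first (forward) component records the switches realisable on paths $w_1\to w_2$ and the second (backward) component those realisable on paths $w_2\to w_1$, always over $\mathcal{R}(T)$. First I would isolate the purely combinatorial fact behind the lemma: since the cones of $v_1$ and $v_2$ are disjoint and $u_i$ lies in the cone of $v_i$, the vertex set splits into three regions, the cone $C_1$ of $v_1$, the cone $C_2$ of $v_2$, and the rest $O$. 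In $\mathcal{R}(T)$ the only edges joining $C_1$ to its complement are the two orientations of the edge between $v_1$ and its parent, and that parent lies in $O$ precisely because neither $v_i$ is an ancestor of the other; symmetrically for $C_2$ via $v_2$. In particular there is no edge at all between $C_1$ and $C_2$.

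The easy (``if'') direction is concatenation: given a path $u_1\to v_1$, a path $v_1\to v_2$ and a path $v_2\to u_2$ on which $\mathcal{A}$ realises switches $q\to p_1$, $p_1\to p_2$, $p_2\to q'$, their concatenation is a path $u_1\to u_2$ realising $q\to q'$. The substantive direction is the converse. Given any path $W$ from $u_1$ to $u_2$ on which $\mathcal{A}$ switches from $q$ to $q'$, I would let $\tau_1$ be the last time $W$ lies in $C_1$ and $\tau_2$ the first time after $\tau_1$ that $W$ lies in $C_2$. By the region structure, $W$ can leave $C_1$ only while sitting at $v_1$ and can enter $C_2$ only while sitting at $v_2$, so $W(\tau_1)=v_1$ and $W(\tau_2)=v_2$; moreover $\tau_1<\tau_2$ and the stretch strictly between them stays in $O$. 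Cutting $W$ at $\tau_1,\tau_2$ then yields a path $u_1\to v_1$, a path $v_1\to v_2$ and a path $v_2\to u_2$, and taking $p_1,p_2$ to be the states of $\mathcal{A}$ at times $\tau_1,\tau_2$ gives exactly the required decomposition of the switch $q\to q'$. It is harmless that the first and third sub-paths may wander through $O$ or even through the opposite cone, because the type of the relevant endpoint pair already quantifies over \emph{all} paths in $\mathcal{R}(T)$ between those endpoints.

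Combining the two directions gives the composition identity: $(q,q')$ lies in the forward component of the type of $(u_1,u_2)$ iff there are states $p_1,p_2$ with $(q,p_1)$ in the backward component of $t_2$ (a path $u_1\to v_1$), $(p_1,p_2)$ in the forward component of $t_1$ (a path $v_1\to v_2$), and $(p_2,q')$ in the forward component of $t_3$ (a path $v_2\to u_2$); a symmetric identity obtained by reversing all orientations describes the backward component of the type of $(u_1,u_2)$. Since both right-hand sides depend only on $t_1,t_2,t_3$, these identities define the function $f\colon\mathbf{Tp}_{\mathcal{A}}^3\to\mathbf{Tp}_{\mathcal{A}}$ and establish $t_4=f(t_1,t_2,t_3)$. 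The one point genuinely requiring care is the existence of the cut times $\tau_1<\tau_2$ and the fact that they pin $W$ to $v_1$ and to $v_2$; this is exactly where disjointness of the cones is used, since it is what forbids $W$ from crossing directly between $C_1$ and $C_2$ without passing through the two apex vertices.
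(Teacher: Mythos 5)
Your proof is correct. The paper in fact gives no proof of this lemma at all --- it is dismissed with ``Obviously the following two lemmas holds'' --- and your argument is exactly the natural justification of that claim: the crucial point, which you identify and verify, is that since every edge of $\mathcal{R}(T)$ leaving the cone of $v_1$ (resp.\ entering the cone of $v_2$) passes through $v_1$ (resp.\ $v_2$), any path $u_1 \to u_2$ can be cut at the last exit from the first cone and the first subsequent entry into the second cone, yielding the three-fold relational composition that defines $f$; the converse direction is plain concatenation.
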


Suppose  $T$ is a tree.  For a vertex $v$ of $T$ we call the set of all vertices reachable from $v$ (including $v$ itself) {\it $T$-cone} under $v$ . We say that a vertex $v$ is a {\it $T$-successor} of $v'$ if there is an edge from $v'$ to $v$ in $T$.

Suppose $(A,R)$ is a well-ordering and $a\in A^{\mathrm{lim}}$. We say that a set $B\subset A$ is {\it cofinal} in $a$ if $\sup_R B=a$.

\begin{theorem} \label{adding_cofinals} Suppose $n$ is a number, $G=(\mathbf{C},V,U)\in \mathbf{Graph}_n$ is a well-ordering for color $c\in \mathbf{C}$. Then there exists a color $d\not \in \mathbf{C}$, a set $U_d\subset V\times \{d\}\times V$ such that $G'=(\mathbf{C}\sqcup\{d\},V,U\sqcup U_d)\in \mathbf{Graph}_n$ and $d$ gives a system of cofinal sequences for $R_c$. 
\end{theorem}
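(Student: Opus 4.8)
The plan is to reduce to a deterministic tree and to define the whole system of cofinal sequences there in monadic second-order logic. By Theorem~\ref{Inv_rat_theorem} I would choose a deterministic tree $T=(\mathbf{D},V,E)\in\mathbf{Tree}_n$ and regular languages realizing all colors of $G$, fix a finite automaton $\mathcal{A}$ over $\mathbf{D}\sqcup\mathbf{D}^{-}$ recognizing $L_c$ and its reverse, and let $g$ be the bijection of the induced monadic interpretation of $G$ in $T$. Since $T$ is deterministic and $\mathbf{D}$ is finite, $T$ is finitely branching; this is exactly what makes König's lemma available below. Writing $\prec$ for $R_c$ transported to $V$ via $g$, the relation $\prec$ together with the cone and ancestor relations of $T$ are MSO-definable: whether $x\prec y$ is read off from the type of $(g^{-1}(x),g^{-1}(y))$ using the lemma providing the formulas $\varphi_t$, while Lemmas~\ref{type_calculation1} and~\ref{type_calculation2} let one compute such types from the tree geometry.

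Next I would introduce, for a limit $a\in V^{\mathrm{lim}}$, the predicate ``$v$ is $a$-good'', meaning that the cone under $v$ meets $D_a:=\{x\mid x\prec a\}$ cofinally in $a$, i.e. $\sup_{R_c}(\mathrm{Cone}(v)\cap D_a)=a$; this is MSO-definable uniformly in $a$. The root is $a$-good, and by finite branching every $a$-good vertex has an $a$-good child: if all of its finitely many children were non-good, their cones would meet $D_a$ in sets bounded below $a$, so the parent cone would meet $D_a$ in a bounded set, contradicting goodness. Hence the $a$-good vertices form an infinite, finitely branching, prefix-closed subtree, which by König's lemma carries an infinite branch; I would fix the MSO-definable canonical (say leftmost) such branch $v_0,v_1,\dots$. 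For $a$-good $v$ put $\mu(v)=\sup_{R_c}$ of $D_a$ inside the cone of $v$ with the cones of the $a$-good children of $v$ removed. Since the removed part carries all the cofinal mass, $\mu(v)\prec a$, and a path-chasing argument shows $\{\mu(v)\mid v\ a\text{-good}\}$ is $\prec$-cofinal in $a$ with every term strictly below $a$. Taking $R_d$ to be ``$x$ is a term of the sequence attached to the limit $y$'', read off from this set along the chosen branch, yields a strictly monotone candidate (being a subset of the well-order).

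The main obstacle is to guarantee that the term set attached to each $a$ has order type exactly $\omega$ while remaining cofinal, using only MSO-available notions. The natural index is tree depth, but depth comparison — equivalently the equal-level predicate — is not MSO-definable on trees such as the unfolding of a one-vertex two-loop graph, so the $\omega$-indexing cannot be imported directly; moreover a single good branch need not capture all the cofinal mass when the good subtree branches on both sides of the branch. I expect to resolve this using the finiteness of $\mathbf{Tp}_{\mathcal{A}}$: along the chosen branch only finitely many pair-types occur, so the qualitative behaviour of the cones hanging off the branch (which off-branch cones are $a$-good and how the $\prec$-positions of their suprema compare) is eventually regular, and this regularity lets one define in MSO a genuine cofinal $\omega$-subsequence by keeping one representative per type-stage and discarding the finitely bounded remainder. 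This control of the cone behaviour, secured by Lemmas~\ref{type_calculation1} and~\ref{type_calculation2}, is the crux of the proof.

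Finally I would transfer back. Since $R_d$ is MSO-definable on $T$, adjoining its defining formula to the monadic interpretation of $G$ in $T$ exhibits $G'=(\mathbf{C}\sqcup\{d\},V,U\sqcup U_d)$, with $U_d$ the $g$-preimage of the defined edges, as monadically interpretable in $T\in\mathbf{Tree}_n$; hence $G'\in\mathbf{Graph}_n$. By construction the strictly monotone cofinal sequences $s_d$ witnessed by $R_d$ satisfy $v_1\,R_d\,v_2$ iff $v_2$ is a non-zero limit of $R_c$ and $v_1=s_d(v_2,n)$ for some $n$, so $d$ gives a system of cofinal sequences for $R_c$, as required.
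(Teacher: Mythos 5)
Your setup coincides with the paper's own: reduce to a deterministic tree $T\in\mathbf{Tree}_n$ via Theorem~\ref{Inv_rat_theorem}, fix an automaton recognizing $L_c$, and for a limit point $v_0$ consider the ``good'' vertices (the paper's $S_{v_0}$) whose cones meet $\{x \mid x R_c v_0\}$ cofinally; the finite-branching argument that the root is good and every good vertex has a good child is also exactly the paper's. But your proof stops at the crux. You yourself note that a single (leftmost) good branch ``need not capture all the cofinal mass when the good subtree branches'', and for this you only say you ``expect to resolve'' it by an eventual-regularity-of-types argument along the chosen branch. That is not a proof, and the sketched patch cannot work as stated: finiteness of $\mathbf{Tp}_{\mathcal{A}}$ gives no stabilization of the types of off-branch cones, and, more importantly, if a good cone really did hang off the chosen branch, then the slices of that branch (cones minus the cones of good children) might jointly meet $\{x \mid x R_c v_0\}$ only in a bounded set --- all the cofinal mass could sit inside the off-branch good cone, which lies in no slice --- so no selection of representatives from those slices, however type-regular, could be cofinal. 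The scenario has to be excluded, not worked around.

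Excluding it is precisely the paper's key step, and it is the idea missing from your proposal: \emph{any two good vertices whose cones avoid $v_0$ are tree-comparable}. The paper proves this with Lemma~\ref{type_calculation2}: if $v_1,v_2$ were incomparable, their cones are disjoint; partition each cone according to the vertex-pair type of (point, cone root); since there are finitely many types, each cone contains a type-homogeneous class cofinal in $v_0$; by Lemma~\ref{type_calculation2} all cross pairs $(u_1,u_2)$ from the two chosen classes have one and the same type, hence stand in the same $R_c$-relation, so one class is bounded by any single element of the other, contradicting its cofinality. Consequently the good vertices avoiding $v_0$ form a single branch of order type $\omega$: there is nothing left for K\"onig's lemma to choose, and no mass escapes sideways. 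The cofinal sequence is then read off from the slices between consecutive vertices of this unique branch (one $R_c$-maximal element below $v_0$ from each slice containing such elements), all of it uniformly MSO-definable in the parameter $v_0$. Note also that your worry about the equal-level predicate is a red herring: the $\omega$-indexing comes from the tree order along the unique branch, not from depth comparison. So your proposal is correctly structured but lacks the comparability lemma, and without it the step that produces a definable cofinal sequence of order type $\omega$ fails.
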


\begin{proof} We apply Theorem \ref{Inv_rat_theorem} to $\mathbf{Graph}_n$. We obtain deterministic  tree $T=(\mathbf{D},V,E)\in \mathbf{Tree}_n$ and regular languages $L_e\subset (\mathbf{D}\sqcup \mathbf{D}^{-})^{\star}$, for $e\in\mathbf{C}$. We consider an automaton $\mathcal{B}$ and initial state $q_I$ and accepting state $q_c$ that recognize the language $L_c$. Note that for all $v_1,v_2\in V$ we have $(v_1,c,v_2)\in U$ iff $\mathcal{B}$ can switch from $q_I$ to $q_c$ on a path from $v_1$ to $v_2$ in $T$. 

  We consider some $R_c$-limit point $v_0$ (note that the set of all $R_c$-limit points is monadically definable). Then we consider the set $S_{v_0}$ of all points $v$ such that $T$-cone under $v$ contains some subset $R_c$-cofinal in $v_0$. $T$ is deterministic, there are only finitely many $T$-successors of any vertex and thus at least one $T$-immediate successor of a vertex from $S_{v_0}$ must lie in $S_{v_0}$.  Therefore, the set $S_{v_0}$ is infinite. We  consider the set $P_{v_0}$ of all vertices from $S_{v_0}$ that doesn't contain $v_0$ in their $T$-cone. The set of vertices $v$ from $S_{v_0}$ such that $v_0$ lies in $T$-cone under $v$ is finite because there are only finitely-many vertices above $v_0$.  Thus the set  $P_{v_0}$ is infinite.  Clearly, there is a formula $\psi(x,y)$ such that $T\models_{\mathrm{MSO}} \psi(v_0,v)$ iff $v\in P_{v_0}$.

Now let us prove that for every $v_1,v_2\in P_{v_0}$ either the vertex $v_1$ lies in the $T$-cone of $v_2$ or the the vertex $v_2$ lies in the $T$-cone of $v_1$.  For a contradiction, assume that neither the vertex $v_1$ lies in the $T$-cone of $v_2$ or the the vertex $v_2$ lies in the $T$-cone of $v_1$. We separate $T$-cone of $v_1$ on sets $A^{v_1}_t$, where $t\in \mathbf{Tp}_{\mathcal{B}}$; the set $A^{v_1}_t$ consists of all the points $v$ from $T$-cone of $v_1$ such that the type of $(v,v_1)$ is equal to $t$.  In the same fashion we  separate $T$-cone of $v_2$ on sets $A^{v_2}_t$.  Because $\mathbf{Tp}_{\mathcal{B}}$ is finite, there exist a  set  $A^{v_1}_{t_1}$ and a set  $A^{v_2}_{t_2}$ that are cofinal in $v_0$.  From Lemma \ref{type_calculation2} it follows that  the $\mathcal{B}$-type of all pairs $(u_1,u_2)$, where $u_1\in A^{v_1}_{t_1}$ and $u_2\in A^{v_2}_{t_2}$, doesn't depend on the choice of $u_1$ and $u_2$.  Thus we can $R_c$ compare the sets $ A^{v_1}_{t_1}$ and $ A^{v_2}_{t_2}$, i.e. either for all $u_1\in A^{v_1}_{t_1}$ and $u_2\in A^{v_2}_{t_2}$ we have $u_1 R_c u_2$ or for all $u_1\in A^{v_1}_{t_1}$ and $u_2\in A^{v_2}_{t_2}$ we have $u_2 R_c u_1$. Thus one of the sets is not cofinal, contradiction.

We have linear order $\sqsubset$ on $P_{v_0}$:
$$w_1\sqsubset w_2 \stackrel{\text{def}}{\iff} \mbox{ $w_1\ne w_2$ and $w_2$ is in the $T$-cone of $w_1$}.$$
Clearly, the order type of $(P_{v_0},\sqsubset)$ is $\omega$. We enumerate $P_{v_0}=\{w^{v_0}_0,w^{v_0}_1,w^{v_0}_2,\ldots\}$. Now we consider sets of vertices $B^{v_0}_0,B^{v_0}_1,B^{v_0}_2,\ldots$ that are cones under  $w^{v_0}_0,w^{v_0}_1,w^{v_0}_2,\ldots$, respectively. We consider the sequence $k^{v_0}_0<k^{v_0}_1<k^{v_0}_2<\ldots$ of all indexes $k$ such that  there exists $u\in B^{v_0}_k\setminus B^{v_0}_{k+1}$ with $uR_c v_0$.  We denote the set of all $w^{v_0}_{k^{v_0}_i}$ by $J_{v_0}$.

Now we define the desired cofinal sequence $u^{v_0}_0,u^{v_0}_1,u^{v_0}_2,\ldots$ for $v_0$. The element $u^{v_0}_i$ is $R_c$-maximal element such that $u^{v_0}_i\in B^{v_0}_{k_i}\setminus B^{v_0}_{k_i+1}$ and $u^{v_0}_iR_c v_0$.   We denote by $K_{v_0}$ the set $\{u^{v_0}_0,u^{v_0}_1,u^{v_0}_2,\ldots\}$.  

It  is easy to see that there exists a monadic formula that defines the set of all pairs $(v_0,P_{v_0})$. Thus there exists a monadic formula that define the set of all pairs $(v_0,J_{v_0})$ and therefore there exists a monadic formula that defines the set of all pairs $(v_0,K_{v_0})$. We use the late formula to define  the required  binary relation $R_d$ on $V$: $$v_1R_d v_2 \stackrel{\text{def}}{\iff} v_1\in K_{v_2}.$$

Using this definition we build monadic interpretation of $G'$ with the desired property in $T$.
\end{proof}

\begin{theorem} \label{making_Bachmann}Suppose $n$ is a number, $G=(\mathbf{C},V,U)\in \mathbf{Graph}_n$ is a well-ordering for color $c\in \mathbf{C}$, and color $d\in \mathbf{C}$ gives a system of cofinal sequences for $R_c$. Then there exists a color $e\not \in \mathbf{C}$, a set $U_e\subset V\times \{e\}\times V$ such that $G'=(\mathbf{C}\sqcup\{e\},V,U\sqcup U_e)\in \mathbf{Graph}_n$, $e$ gives a system of cofinal sequences for $R_c$, $R_e\subset R_d$, and $s_e$ have Bachmann property. 
\end{theorem}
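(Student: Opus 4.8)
The plan is to reduce, exactly as in the proof of Theorem~\ref{adding_cofinals}, to a deterministic tree $T=(\mathbf{D},V,E)\in\mathbf{Tree}_n$ via Theorem~\ref{Inv_rat_theorem}, to fix a finite automaton $\mathcal{B}$ over $\mathbf{D}\sqcup\mathbf{D}^{-}$ whose switching behaviour encodes both $R_c$ and $R_d$ as path conditions, and to work throughout with the vertex-pair types of $\mathbf{Tp}_{\mathcal{B}}$ and the cone machinery of Lemmas~\ref{type_calculation1} and~\ref{type_calculation2}. The conceptual heart of the argument is that, once the smaller part of the target system is fixed, the Bachmann requirement leaves \emph{no freedom}. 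Writing $s_e$ for the sought system, $V^{\mathrm{lim}}$ for the set of $R_c$-limit points, and proceeding by recursion along $(V,R_c)$, for a limit $x$ I set
\[
K_x=\{\,v\in\mathrm{ran}(s_d(x,\cdot))\mid \forall y\,(y\in V^{\mathrm{lim}}\wedge v\mathrel{R_c}y\mathrel{R_c}x \rightarrow (v=s_e(y,0)\vee v\mathrel{R_c}s_e(y,0)))\,\}
\]
and declare $s_e(x,\cdot)$ to be the increasing enumeration of $K_x$. I would first establish a clean equivalence. \textbf{Necessity:} every element of any Bachmann subsystem of $s_d$ must lie in $K_x$, since if a selected $v$ had an $R_c$-limit $y\in(v,x)$ with $s_e(y,0)\mathrel{R_c}v$, then $y$ would fall into a gap of $x$'s sequence whose left endpoint is $R_c$-above $v$, contradicting Bachmann. \textbf{Sufficiency:} taking $s_e(x,\cdot)=K_x$ does satisfy Bachmann, because membership of the left endpoint $v$ of a gap in $K_x$ already guarantees $v=s_e(y,0)$ or $v\mathrel{R_c}s_e(y,0)$ for every limit $y\in(v,x)$, a fortiori for those inside the gap. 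This pins down $s_e$ as the pointwise-largest admissible thinning and reduces the theorem to cofinality and definability.

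The main obstacle is to show that $K_x$ is cofinal in $x$, so that $s_e(x,\cdot)$ is a genuine strictly monotone cofinal sequence and $R_e\subset R_d$. Were $K_x$ bounded by some $c^\ast\mathrel{R_c}x$, one could extract $R_c$-limits $y_k$ tending to $x$ whose values $s_e(y_k,0)$ dip $R_c$-below points of $\mathrm{ran}(s_d(x,\cdot))$ that themselves tend to $x$; i.e. the $0$-th values would fall arbitrarily far below $y_k$ cofinally often. This is precisely where the tree structure must be used: the $y_k$ can be taken inside the chain $P_x$ of nested $T$-cones approaching $x$ from the proof of Theorem~\ref{adding_cofinals}, and along such a chain the types of the pairs $(w^{x}_i,x)$ take only finitely many values in $\mathbf{Tp}_{\mathcal{B}}$. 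I would argue that such a type determines, through the cone data controlled by Lemma~\ref{type_calculation2}, the relative $R_c$-position of $s_e(y,0)$ for limits $y$ inside the $i$-th cone; finiteness of $\mathbf{Tp}_{\mathcal{B}}$ then forces the dipping pattern to be eventually periodic, which contradicts an unbounded dip. I expect the Parys pumping lemma (Theorem~\ref{pumping_lemma}) to enter here, converting a single sufficiently long ``low $0$-th value'' configuration into infinitely many, thereby ruling out the pathological cofinal family of small $s_e(y_k,0)$.

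The remaining point is that the relation $v\mathrel{R_e}x\iff v\in K_x$ is monadically definable on $T$. This is delicate because $K_x$ is defined by recursion along $(V,R_c)$ and its defining clause uses the previously constructed values $s_e(y,0)$ negatively, so $K$ is not the least fixpoint of a monotone operator; nonetheless the recursion is well-founded. I would make it MSO-expressible exactly in the style of Theorem~\ref{adding_cofinals}: encode the finitely many type parameters carried along the cone-chains as monadic set variables and characterise the family $\{(x,K_x)\}$ as the unique solution of a type-indexed system, using Lemmas~\ref{type_calculation1} and~\ref{type_calculation2}; finiteness of $\mathbf{Tp}_{\mathcal{B}}$ is what turns the transfinite recursion into a local, automaton-style computation that MSO can capture. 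Once $R_e$ is defined, the inclusion $R_e\subset R_d$ and the fact that $e$ gives a strictly monotone system of cofinal sequences follow from cofinality of $K_x$, and the Bachmann property is the sufficiency half above; packaging these data as a monadic interpretation of $G'=(\mathbf{C}\sqcup\{e\},V,U\sqcup U_e)$ in $T$ yields $G'\in\mathbf{Graph}_n$. Since the entire construction is realised by an MSO-definable relation on $T$ itself, the stronger assertion for deterministic trees, that no passage to an extension is needed, comes out with no extra work.
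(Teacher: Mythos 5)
Your construction breaks down exactly at the point you flag as ``the main obstacle'': the set $K_x$ need not be cofinal in $x$ --- it can be bounded --- so the recursion simply does not produce a system of cofinal sequences, and no cone/type/pumping argument can rescue it because the failure is purely order-theoretic. Concretely, take any $G\in\mathbf{Graph}_n$ well-ordered by $R_c$ whose order type exceeds $\omega^2$, so that some vertex $x_0$ is a limit of limit points, and let $R_d$ be any relation giving a system of cofinal sequences (Theorem \ref{adding_cofinals} provides one). Let $\mathbf{0}=\inf_{R_c}V$ and pass to $R_{d'}=R_d\cup\{(\mathbf{0},y)\mid y\in V^{\mathrm{lim}}\}$: this is MSO-definable from $R_d$ and $R_c$, so the modified graph is still in $\mathbf{Graph}_n$, and $d'$ still gives a strictly monotone system of cofinal sequences (each sequence just acquires the prefix $\mathbf{0}$). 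Now run your recursion on $s_{d'}$. Since $\mathbf{0}$ is the minimum, $\mathbf{0}\in K_y$ for every limit $y$, hence $s_e(y,0)=\min K_y=\mathbf{0}$ for every limit $y$. Consequently, for any limit $x$ and any $v\in\mathrm{ran}(s_{d'}(x,\cdot))$ with $\mathbf{0}\mathrel{R_c}v$, the existence of a single limit $y$ with $v\mathrel{R_c}y\mathrel{R_c}x$ already excludes $v$ from $K_x$, because $v\le s_e(y,0)=\mathbf{0}$ fails. At $x=x_0$ the limit points are cofinal below $x_0$, so every such $v$ is excluded and $K_{x_0}=\{\mathbf{0}\}$: bounded, not cofinal. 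The structural reason is that Bachmann coherence cannot be achieved by thinning the sequence at $x$ while keeping the sequences below $x$ pointwise-maximal; one must also \emph{raise the first elements of the lower sequences} (here: delete the dips to $\mathbf{0}$), and your greedy recursion, taking the largest admissible set at every stage, never does this --- maximality below is precisely what empties $K_x$ above. The same example refutes your ``necessity'' claim: any genuine Bachmann thinning must have $s_e(y,0)\ne\mathbf{0}$ for the relevant $y$'s, so its sequence at $x_0$ is not contained in your $K_{x_0}$; the claim is only valid conditionally on the lower sequences, and your recursion fixes those the wrong way. (Note also that the paper's proof of this theorem makes no use of Theorem \ref{pumping_lemma}; that tool enters only in Section \ref{fast_growing_section}.)

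For contrast, the paper's proof involves no recursion along $R_c$ at all, which is also what makes definability unproblematic. After reducing to a deterministic tree $T$ via Theorem \ref{Inv_rat_theorem}, it chooses for each limit $v_0$, deterministically, a $T$-cone $u_{v_0}$ containing infinitely many elements of $s_d(v_0,\cdot)$ but not $v_0$, lets $P_{v_0}$ be those elements having one fixed (least) vertex-pair type relative to $u_{v_0}$ (by Lemma \ref{type_calculation1} these all belong to $\mathrm{ran}(s_d(v_0,\cdot))$), and then defines $O_{v_0}$ by deleting from $P_{v_0}$ the finitely many elements conflicting with the finitely many triples $(v_1,u_{v_1},P_{v_1})$ whose cone vertex lies on the path from $v_0$ to the root (the set $Z_{v_0}$). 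Interference between two limits $v_0\mathrel{R_c}v_1$ is then handled by a two-case analysis: either the triple for $v_1$ is in $Z_{v_0}$ (covered by the deletion), or $v_0$ lies outside the cone of $u_{v_1}$, in which case Lemma \ref{type_calculation1} forces all of $P_{v_1}$ to one side of $v_0$. All sequences are thus thinned \emph{simultaneously} by a local, automaton-type-based rule, which is directly MSO-expressible. Your $K$, by contrast, is a transfinite recursion in which previously computed values occur negatively; as you concede it is not a monotone fixpoint, and MSO over $T$ has no general mechanism for such recursions --- but this definability worry is moot, since the recursion fails already at the level of set theory.
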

\begin{proof}  As in the proof of the previous theorem we find deterministic tree $T=(\mathbf{D},V,E)\in \mathbf{Tree}_n$, and regular languages $L_a\subset (\mathbf{D}\sqcup \mathbf{D}^{-})^{\star}$, for $a\in\mathbf{C}$. Then we build an automaton $\mathcal{B}$, the initial states $q_c^I$, $q_d^I$, and final states states $q_c$ and $q_d$ such that $\mathcal{B}$ recognize $L_c$ on runs that starts from the state $q_c^I$ and ends at the state $q_c$ and recognize $L_d$ on runs that starts from the state $q_d^I$ and ends on the state $q_d$.  Note that for all $v_1,v_2\in V$ we have
\begin{enumerate}
 \item $(v_1,c,v_2)\in U$ iff $\mathcal{B}$ can switch from $q_I$ to $q_c$ on a path from $v_1$ to $v_2$ in $T$,
 \item $(v_1,d,v_2)\in U$ iff $\mathcal{B}$ can switch from $q_I$ to $q_d$ on a path from $v_1$ to $v_2$ in $T$.
\end{enumerate}


Let us consider some $R_c$-limit point $v_0$ . We find the point $u'$  on the path  from $v_0$ over inverse edges of $T$ to the root of $T$ such that $T$-cone of $u'$ contains infinitely many elements of $s_d$ cofinal sequence for $v_0$. We fix some ordering on $\mathbf{D}$. We consider first color $a\in D$ and $a$-successor $u_{v_0}$ of $u_{v_0}$ in $T$ such that $u_{v_0}$ exists,  $T$-cone of $u_{v_0}$ doesn't contain $v_0$, and $T$-cone of $u_{v_0}$ contains infinitely many elements of $s_d$ cofinal sequence for $v_0$. Simple check shows that such a color $a$  and such a vertex $u_{v_0}$ exists.  Thus we have found in a deterministic way a cone that contains infinitely many elements of $s_d$ cofinal sequence for $v_0$ but not $v_0$ itself.

We fix some linear ordering of $\mathbf{Tp}_{\mathcal{B}}$. We consider the least vertex pair type $t\in \mathbf{Tp}_{\mathcal{B}}$ such that there are infinitely many elements $w$ of cofinal sequence for $v_0$  in the cone of $u_{v_0}$ such that the type of $(w,u_{v_0})$ is $t$; we denote the set of all that $w$ by $P_{v_0}$.  From Lemma \ref{type_calculation1} it follows that all elements of $P_{v_0}$ were elements of $s_d$ cofinal sequence for $v_0$.

Clearly the set $B$ of all triples $(v_0, u_{v_0},P_{v_0})$ is monadically definable. Now for a given $R_c$-limit point $v_0$ we consider the set $Z_{v_0}$ of all triples  $(v_1, u_{v_1},P_{v_1})\in B$, where $v_1\ne v_0$ and $u_{v_1}$ is on the path from $v_0$ over inverse edges in $T$ to $T$-root.  Obviously there are only finitely many different $u_{v_1}$'s in elements of $Z_{v_0}$. Thus, because the set $\mathbf{Tp}_{\mathcal{B}}$ is finite, there are only finitely many different $(u_{v_1},P_{v_1})$'s in elements of $Z_{v_0}$. Therefore, because $v_1$ is always the limit of $P_{v_1}$, the set $Z_{v_0}$ is finite.  We consider the set $O_{v_0}\subset P_{v_0}$ of all  $w\in P_{v_0}$ such that for any $(v_1, u_{v_1},P_{v_1})\in Z_{v_0}$ and $w'\in P_{v_1}$ we have either $w'R_c w$, or $w'=v_0$, or $v_0 R_cw'$. Because for every $(v_1, u_{v_1},P_{v_1})\in Z_{v_0}$ there are only finitely many elements of $P_{v_1}$ that are not $R_c$-greater than $v_0$, the set $O_{v_0}$ is infinite. Clearly, the set of all pairs $(v_0,O_{v_0})$ is monadically definable in $T$. 

For all $w_1,w_2\in V$ we put $$w_1 R_e w_2\stackrel{\text{def}}{\iff}\mbox{$w_2$ is $R_c$ limit point and $w_1\in O_{w_2}$}.$$
Obviously, $R_e$ gives a system of cofinal sequences for $R_c$.

Now we show that $s_e$ have Bachmann property. We consider some $R_c$-limit points $v_0$ and $v_1$ such that $v_0 R_c v_1$ we claim that we can separate $P_{v_1}$ on two disjoint sets $F^{-}$ and $F^{+}$ such that 
\begin{enumerate}
\item we have $w R_c w'$, for all $w\in F^{-}$ and $w' \in O_{v_0}$,
\item we have $w' R_c w$, for all $w\in F^{+}$ and $w' \in O_{v_0}$.
\end{enumerate}
There are two possible cases: 1. $(v_1,u_{v_1},P_{v_1})\in Z_{v_0}$, 2. $v_0$ isn't in the $T$-cone of $u_{v_1}$. In the first case we have such a separation on $F^{-}$ and $F^{+}$ by construction of $O_{v_0}$. In the second case  from Lemma \ref{type_calculation1} it follows that  all the elements of $P_{v_1}$ are in the same $R_c$ relation to $v_0$. Because $P_{v_1}$ consists of the elements of some cofinal sequence for $v_1$, there is some $w\in P_{v_1}$ such that $v_0 R_c w$. Therefore we have $v_0 R_c w$ for all $w\in P_{v_1}$. Hence we can put $F^{-}=\emptyset$ and $F^{+}=P_{v_1}$.
It is easy to see that Bachmann property for $s_e$ follows from the claim for all $v_0$ and $v_1$.\end{proof}

Using Theorem \ref{adding_cofinals} and Theorem \ref{making_Bachmann} we obtain stronger version of  Theorem \ref{adding_cofinals}.
\begin{corollary}  Suppose $n$ is a number, $G=(\mathbf{C},V,U)\in \mathbf{Graph}_n$ is a well-ordering for color $c\in \mathbf{C}$. Then there exists a color $d\not \in \mathbf{C}$, a set $U_d\subset V\times \{d\}\times V$ such that $G'=(\mathbf{C}\sqcup\{d\},V,U\sqcup U_d)\in \mathbf{Graph}_n$ and $d$ gives a system of cofinal sequences for $R_c$ with Bachmann property. 
\end{corollary}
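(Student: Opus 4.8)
The plan is to chain the two preceding results and then discard the auxiliary color that the intermediate step introduces. First I would apply Theorem \ref{adding_cofinals} to $G$, obtaining a color $d_1 \notin \mathbf{C}$ and a set $U_{d_1} \subset V \times \{d_1\} \times V$ so that $G_1 = (\mathbf{C} \sqcup \{d_1\}, V, U \sqcup U_{d_1}) \in \mathbf{Graph}_n$ and $d_1$ gives a system of cofinal sequences $s_{d_1}$ for $R_c$. Note that $G_1$ is still a well-ordering for the same color $c$, and it now carries a color, namely $d_1$, that gives a system of cofinal sequences for $R_c$, so it is exactly of the form required as input to the second theorem.

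Next I would apply Theorem \ref{making_Bachmann} to $G_1$ with respect to the colors $c$ and $d_1$. This yields a color $e \notin \mathbf{C} \sqcup \{d_1\}$ and a set $U_e$ such that $G_2 = (\mathbf{C} \sqcup \{d_1\} \sqcup \{e\}, V, U \sqcup U_{d_1} \sqcup U_e) \in \mathbf{Graph}_n$, the color $e$ gives a system of cofinal sequences $s_e$ for $R_c$, and $s_e$ has the Bachmann property (the inclusion $R_e \subset R_{d_1}$ provided by the theorem is not needed here). Since $e \notin \mathbf{C}$, I may rename $e$ to the desired fresh color $d$ and set $U_d = U_e$.

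The one remaining point is to delete the auxiliary color $d_1$, since the corollary asks for a graph over the signature $\mathbf{C} \sqcup \{d\}$ only, whereas $G_2$ also carries $d_1$. I would observe that the target graph $G' = (\mathbf{C} \sqcup \{d\}, V, U \sqcup U_d)$ is obtained from $G_2$ simply by forgetting the relation $R_{d_1}$, and that forgetting a relation is a monadic interpretation: take the identity bijection on $V$ and interpret each surviving predicate by itself. Since monadic interpretations compose and $\mathbf{Graph}_n$ is, for $n \geq 1$, defined as the class of monadic interpretations of the trees in $\mathbf{Tree}_n$ (the case $n = 0$ being immediate, as forgetting a color preserves finiteness), the class $\mathbf{Graph}_n$ is closed under monadic interpretation; hence $G' \in \mathbf{Graph}_n$. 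Forgetting $R_{d_1}$ changes neither $R_c$ nor $R_e = R_d$, so $d$ still gives the system of cofinal sequences $s_e$ for $R_c$, and that system has the Bachmann property.

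There is no genuine obstacle here: the substantive work has already been carried out in Theorems \ref{adding_cofinals} and \ref{making_Bachmann}, and all that must be checked is the bookkeeping around the intermediate color, i.e. that the hierarchy is closed under the trivial interpretation that erases one predicate. If one prefers to avoid invoking closure under interpretation altogether, an equivalent route is to re-run both constructions inside a single deterministic tree $T \in \mathbf{Tree}_n$ supplied by Theorem \ref{Inv_rat_theorem}: all the relations $R_a$ for $a \in \mathbf{C}$ together with the final Bachmann-property cofinal-sequence relation $R_d$ are monadically definable over this same $T$, so one directly obtains a single monadic interpretation of $G'$ in $T$, witnessing $G' \in \mathbf{Graph}_n$.
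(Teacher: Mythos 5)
Your proof is correct and follows exactly the paper's route: the paper's entire argument for this corollary is to chain Theorem \ref{adding_cofinals} with Theorem \ref{making_Bachmann}. Your additional bookkeeping step---discarding the intermediate color $d_1$ via the observation that $\mathbf{Graph}_n$ is closed under forgetting a relation (a trivial monadic interpretation, or equivalently restricting the interpretation in the underlying tree $T\in\mathbf{Tree}_n$ to the surviving symbols)---is a detail the paper silently glosses over, and you handle it correctly.
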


\section{Equivalence of Systems of Cofinal Sequences}
\label{fast_growing_section}

Suppose $(A,R)$ is a strict well-ordering  and $s\colon A^{\mathrm{lim}}\times \omega  \to A$ is a system of cofinal sequences. For an element $a\in A$ we encode finite down-paths from $a$ by finite sequences of natural numbers. We simultaneously define set $\mathbf{Path}^s_a$ of path codes and function $\rho^s_a\colon \mathbf{Path}^s_a\to A$ that maps a path code to the end of the corresponding path. Empty sequence $()$ lies in $\mathbf{Path}^s_a$ and $\rho^s_a( ())=a$. If sequence $(n_1,\ldots,n_k)$ lies in $\mathbf{Path}^s_a$ then  
\begin{enumerate}
\item if $\rho^s_a((n_1,\ldots,n_k))\in A^{\mathrm{lim}}$ then for all $m\ge 1$ the sequence $(n_1,\ldots,n_k,m)\in \mathbf{Path}^s_a$ and $\rho^s_a((n_1,\ldots,n_k,m))=s(\rho^s_a((n_1,\ldots,n_k)),m-1)$,
\item if $\rho^s_a((n_1,\ldots,n_k))$ is subsequent for $b\in A$ then the sequence $(n_1,\ldots,n_k,0)\in \mathbf{Path}^s_a$ and $\rho^s_a((n_1,\ldots,n_k,0))=b$.
\end{enumerate}
The set $\mathbf{Path}^s_a$ is the minimal set with described properties. Clearly, for every $b R a$ there  exists $p\in \mathbf{Path}^s_a$ such that $\rho^s_a(p)=b$.

For a sequence of natural numbers $(n_1,\ldots,n_k)$ we put $|(n_1,\ldots,n_k)|=n_1+\ldots+n_k+k$

Note that for a cofinal sequence systems with Bachmann property there is simple algorithm to find the path to a target point with the least $|\cdot|$. The path is defined by induction. Suppose our partial path is $(e_1,\ldots,e_k)$. We find the least $e_{k+1}$ such that $(e_1,\ldots,e_k,e_{k+1})$ encodes a path which ends either at our target point or at  a point that is above our target point; in the first case we are done in the second case we repeat the procedure. The path defined this way have the least $|\cdot|$ because every other path to the same point will go throe all intermediate points of the calculated path.

Suppose $G=(\mathbf{C},V,U)$ is a graph, and $e\not \in\mathbf{C}$. We define {\it treegraph} of $G$, it is a graph $\mathcal{T}_{e}(G)=(\mathbf{C}\sqcup \{e\},V^{+},U^{+}\sqcup S)$. The set $V^{+}$ is the set of all non-empty sequences of elements of $V$. The set $U^{+}$ consists of all edges of the form
$$((v_1,\ldots,v_k,u),c,(v_1,\ldots,v_k,w)),$$ such that there were the edge $(u,c,w)$ in $G$. The set $S$ contains all the edges of the form $$((v_1,\ldots,v_k,u),e,(v_1,\ldots,v_k,u,u)).$$

\begin{theorem}(\cite{CarWoh03}) \label{treegraph_theorem} If $G\in\mathbf{Graph}_n$ and $\mathcal{T}_{e}(G)$ is defined then $\mathcal{T}_{e}(G)\in \mathbf{Graph}_{n+1}$.
\end{theorem}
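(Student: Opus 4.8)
The plan is to show that $\mathcal{T}_e(G)$ is monadically interpretable in some tree from $\mathbf{Tree}_{n+1}$; since $\mathbf{Graph}_{n+1}$ is by definition the class of such interpretations, this suffices. The tree I would use is an unfolding of a mild modification of the deterministic tree representing $G$, engineered so that its vertices are in an \emph{exact} bijection with the vertex set $V^{+}$ of the treegraph.

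First I would apply Theorem \ref{Inv_rat_theorem} to $G$, obtaining a deterministic tree $T=(\mathbf{D},V,E)\in\mathbf{Tree}_n$ on the same vertex set $V$, regular languages $L_c\subset(\mathbf{D}\sqcup\mathbf{D}^{-})^{\star}$, and an automaton $\mathcal{B}$ realising each $G$-edge of colour $c$ as an $L_c$-marked path in $\mathcal{R}(T)$. Let $r$ be the root of $T$. I then enlarge $T$ to $H=(\mathbf{D}\sqcup\{\#\},V,E\sqcup\{(v,\#,r)\mid v\in V\})$ by adjoining a fresh colour $\#$ and an edge from every vertex back to $r$. The relation $\{(v,r)\mid v\in V\}$ is monadically definable in $T$ ($r$ being the unique vertex with no incoming edge), so $H$ is interpretable in $T$ and hence $H\in\mathbf{Graph}_n$; consequently $\mathcal{U}(H,r)\in\mathbf{Tree}_{n+1}$. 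The bijection is the crucial bookkeeping: a path from $r$ in $H$ splits uniquely at its $\#$-edges into successive maximal descents $D_1,\#,D_2,\#,\ldots,\#,D_j$, each $D_i$ being a downward path from $r$ in $T$, and since $T$ is a tree each $D_i$ is determined by its endpoint $x_i\in V$. This matches the vertices of $\mathcal{U}(H,r)$ exactly with the sequences $(x_1,\ldots,x_j)\in V^{+}$.

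Under this identification the colour-$c$ edges are easy. The vertices $(x_1,\ldots,x_{j-1},x_j)$ and $(x_1,\ldots,x_{j-1},x_j')$ lie in one common copy of $T$, namely the maximal $\mathbf{D}$-subtree hanging below the last $\#$, whose vertices are indexed by the last coordinate $y\in V$. Now $(x_j,c,x_j')\in U$ holds iff there is an $L_c$-marked $\mathcal{R}$-path between them; because $L_c\subset(\mathbf{D}\sqcup\mathbf{D}^{-})^{\star}$ never uses $\#$, every such path automatically stays inside the copy, where the restricted structure is a copy of $\mathcal{R}(T)$. Reading this off the automaton $\mathcal{B}$ (exactly as in the vertex-pair-type lemmas proved above) yields a monadic formula $\varphi_c(x,y)$ defining the colour-$c$ edges of $\mathcal{T}_e(G)$ on $\mathcal{U}(H,r)$.

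The hard part is the colour-$e$ edge, which joins $(x_1,\ldots,x_j)$ to $(x_1,\ldots,x_j,x_j)$: in $\mathcal{U}(H,r)$ it must send a vertex $u$ at position $x_j$ to the vertex of the freshly spawned copy (below the $\#$-child of $u$) sitting at the \emph{same} position $x_j$. Matching positions across two copies demands that the descent entering $u$ and the descent entering its $e$-image spell the same $\mathbf{D}$-word; along the single chain joining them this is a condition of the form ``$w\#w$'', which is not regular and so not monadically definable on a chain. This is precisely where I expect the naive unfolding encoding to break, and where the higher-order pushdown presentation of Theorem \ref{HOPDG} has to be used instead. Representing a vertex of $G$ as the topmost $n$-pds of an $(n+1)$-pds, the colour-$c$ moves are the level-$\le n$ operations acting on that topmost store, while the colour-$e$ move is the single operation $\textsf{push}^{n+1}$, which duplicates the topmost $n$-pds and thereby performs the passage $(x_1,\ldots,x_j)\mapsto(x_1,\ldots,x_j,x_j)$ with the position copied for free; the $\varepsilon$-contraction of the resulting $(n+1)$-pushdown system is then isomorphic to $\mathcal{T}_e(G)$, placing it in $\mathbf{HOPDG}_{n+1}=\mathbf{Graph}_{n+1}$. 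The remaining care is to reconcile configuration-reachability of the pushdown system with the fact that $\mathcal{T}_e(G)$ is defined on all of $V^{+}$: I would first reduce to graphs reachable from a single vertex, as required by Theorem \ref{HOPDG}, and check that the construction accounts for every sequence $(x_1,\ldots,x_j)$ and not merely those reachable inside a fixed copy.
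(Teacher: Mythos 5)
First, a point of reference: the paper itself contains no proof of this statement --- Theorem~\ref{treegraph_theorem} is imported from \cite{CarWoh03} --- so your attempt has to be judged as a reconstruction of the cited argument. Your first construction (unfolding $T$ with $\#$-edges back to the root) is set up correctly, the bijection with $V^{+}$ and the definability of the colour-$c$ edges are fine, and your diagnosis of the failure is genuinely right: the colour-$e$ edge would require defining pairs of the form $(w,\,w\#w)$, and if that were monadically definable then (taking $T$ the complete binary tree, so that $\mathcal{U}(H,r)$ is the complete ternary tree) the non-regular language $\{0^m\#0^m\}$ would be a definable vertex set of a complete deterministic tree, contradicting the classical fact that such definable sets are exactly the regular ones. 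The genuine gap is in the pushdown half that you fall back on. Theorem~\ref{HOPDG} concerns $\varepsilon$-contractions of configuration graphs, and these contain \emph{only reachable} configurations. Under your encoding, the configuration representing $(x_1,\ldots,x_j,x_{j+1})$ is reachable only when $x_{j+1}$ is $G$-reachable from $x_j$: after a $\textsf{push}^{n+1}$ the simulation can move the new topmost $n$-pds only along $G$-edges starting from $x_j$. So your automaton realises only the subgraph of $\mathcal{T}_e(G)$ on sequences each of whose entries is reachable from the previous one --- a proper subgraph in general. Your proposed remedy, ``reduce to graphs reachable from a single vertex,'' does not close this gap: rootedness gives reachability from one vertex, whereas you need every vertex reachable from every vertex. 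A correct fix is to first replace $G$ by $G\cup\mathcal{R}(T)$ (with $T$ the deterministic tree of Theorem~\ref{Inv_rat_theorem}, which has the same vertex set), which is still in $\mathbf{Graph}_n$, is mutually reachable via the tree edges and their inverses, and whose treegraph yields $\mathcal{T}_e(G)$ after a final colour-forgetting interpretation, under which $\mathbf{Graph}_{n+1}$ is closed. You also need to record the state of the simulated level-$n$ system inside its topmost $n$-pds, since otherwise the map from $V^{+}$ to configurations is not injective and $\textsf{push}^{n+1}$ copies the wrong data; this is routine, but it is exactly the bookkeeping your sketch waves away.

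Separately, the monadic route you abandoned can be repaired more cheaply than by pushdown automata, and in a way closer to \cite{CarWoh03}: unfold $H=\mathcal{R}(T)$ with an $e$-loop added at every vertex, rather than $T$ with edges back to the root. Call a vertex of $\mathcal{U}(H,r)$ \emph{canonical} if the colour word along its access path contains no factor $dd^{-}$ or $d^{-}d$ for $d\in\mathbf{D}$; this is a local, hence monadically definable, condition. Since non-backtracking walks in a tree are unique, canonical vertices are in bijection with $V^{+}$: the $e$-loops cut the access path into non-backtracking walks, one per coordinate, and a repeated coordinate is represented by an \emph{empty} walk rather than by a re-descent from the root. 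Consequently the colour-$e$ edge of $\mathcal{T}_e(\mathcal{R}(T))$ is simply ``$y$ is the $e$-child of $x$'' --- no word matching ever arises --- and the $d$-edges are ``$y$ is the $d$-child of $x$, or $x$ is the $d^{-}$-child of $y$,'' restricted to canonical vertices. Composing with the inverse rational mapping of Theorem~\ref{Inv_rat_theorem} (whose $L_c$-marked paths stay inside a single copy, exactly as you argued for your own construction) exhibits $\mathcal{T}_e(G)$ as a monadic interpretation of a tree in $\mathbf{Tree}_{n+1}$, with no reachability caveats at all. The moral is that your obstruction was an artifact of forcing each new copy to be entered at the root instead of at the vertex being copied.
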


\begin{lemma} \label{path_transform_1}Suppose graph $G=(\mathbf{C},V,U)$ lies in Caucal hierarchy, $G$ is a well-ordering for color $c\in \mathbf{C}$, $d,e\in \mathbf{C}$ give systems of cofinal sequences for $R_c$ with Bachmann property, and $a\in V$. Then there exists $n$ such that for every $p_1\in\mathbf{Path}^{s_d}_a$, $p_2\in \mathbf{Path}^{s_e}_a$ with $\rho^{s_d}_a(p_1)R \rho^{s_e}_a(p_2)$  there exists $p_3\in\mathbf{Path}^{s_e}_{\rho^{s_e}_a(p_2)}$ such that $\rho^{s_e}_{\rho^{s_e}_a(p_2)}(p_3)=\rho^{s_d}_a(p_1)$ and $|p_3|\le\beth_n(|p_1|+|p_2|)$.
\end{lemma}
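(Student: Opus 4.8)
The plan is to pass to a finitely branching higher-order pushdown representation and then let Parys's pumping lemma (Theorem \ref{pumping_lemma}) supply all the quantitative bounds. Fix a level $n$ with $G\in\mathbf{Graph}_n$. First I would apply Theorem \ref{Inv_rat_theorem} to $G$ (carrying along the colours $c,d,e$), obtaining a deterministic tree $T=(\mathbf{D},V,E)\in\mathbf{Tree}_n$ and regular languages $L_c,L_d,L_e\subset(\mathbf{D}\sqcup\mathbf{D}^{-})^{\star}$ that realise $R_c,R_d,R_e$ as path relations in $\mathcal{R}(T)$. Since $\mathcal{R}(T)$ is reachable from the root of $T$ and lies in $\mathbf{Graph}_n$, Theorem \ref{HOPDG} presents it as the $\varepsilon$-contraction of the configuration graph of an $n$-pds $\mathcal{A}$; because $T$ is deterministic and $\mathbf{D}$ is finite, this graph is finitely branching, so Theorem \ref{pumping_lemma} applies to it. I write $\mathrm{dep}(v)$ for the distance of $v$ from the root, which equals the length of the shortest run of $\mathcal{A}$ reaching the configuration $v$.

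Two quantitative ingredients are then needed, both measured against $\mathrm{dep}$. The first is cheap: each step of an $s_d$- or $s_e$-path changes the depth by an amount elementary in the index used at that step, so induction on the length of the code gives $\mathrm{dep}(\rho^{s_d}_a(p_1))\le g(|p_1|)$ and $\mathrm{dep}(\rho^{s_e}_a(p_2))\le g(|p_2|)$ for a fixed elementary $g$ depending only on $T$, the languages $L$ and the fixed point $a$. The essential ingredient is a bound on the indices occurring in a greedy $s_e$-descent. If $x$ is an $R_c$-limit with $\mathrm{dep}(x)\le\delta$ and the target $b'$ also has $\mathrm{dep}(b')\le\delta$, then the greedy index $m$ counts the elements $s_e(x,0),\dots,s_e(x,m-2)$ lying $R_c$-below $b'$. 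Each is reached from $x$ by a path in $\mathcal{R}(T)$ whose marking is the reversal of a word of $L_e$ and which ends below $b'$; there are only finitely many such paths, so Theorem \ref{pumping_lemma}, applied at $x$ to the regular language describing these reversed $L_e$-paths restricted to ending below $b'$, bounds their length by $\beth_{n-1}(C(\delta+1))$. Finite branching then bounds the number of distinct endpoints, and hence $m$, by one further exponential, giving $m\le\beth_n(C'(\delta+1))$.

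With these bounds I would run the canonical greedy $s_e$-descent from $b=\rho^{s_e}_a(p_2)$ to $b'=\rho^{s_d}_a(p_1)$. Under the Bachmann property for $s_e$ this descent is well defined, terminates exactly at $b'$, and yields the $|\cdot|$-shortest code $p_3\in\mathbf{Path}^{s_e}_{b}$ with $\rho^{s_e}_{b}(p_3)=b'$, as explained before the lemma; moreover every intermediate point lies in the $R_c$-interval between $b'$ and $b$. Writing $\delta:=\max(\mathrm{dep}(b),\mathrm{dep}(b'))\le g(|p_1|+|p_2|)$, the number of steps of the descent is elementary in $\delta$ and each index is at most $\beth_n(C'(\delta+1))$; summing the weights over the steps then yields $|p_3|\le\beth_n(|p_1|+|p_2|)$ after absorbing the elementary factors into the tower, which is the claim for this $n$.

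The main obstacle is precisely this passage between cofinal-sequence indices and path lengths in $\mathcal{R}(T)$, kept uniform in $p_1$ and $p_2$. Two points are delicate. First, the full sets of $s_e$- and $s_d$-predecessors of a vertex are infinite, so Theorem \ref{pumping_lemma} cannot be applied to them directly; one must cut them down by the condition ``$R_c$-below $b'$'' and verify that the reversed $L_e$-paths meeting this condition form a regular, hence finite and length-bounded, set. This is where the well-ordering does real work, turning the moving threshold $b'$ into a genuine automaton condition, and I expect it to require the type machinery of Lemmas \ref{type_calculation1} and \ref{type_calculation2}. Second, to keep the tower height equal to the fixed level $n$ rather than growing with the number of descent steps, the expensive pumping estimate must be invoked only boundedly often: the per-step depth changes and the step count have to be shown merely elementary in $|p_1|+|p_2|$, so that the single exponential from finite branching, stacked on the $\beth_{n-1}$ length bound, produces exactly $\beth_n$. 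The technically hardest step should be the depth control of the intermediate points of the greedy descent, since Bachmann's property only confines them to the $R_c$-interval $[b',b]$ while $\mathrm{dep}$ is not $R_c$-monotone; I would again resolve this through Theorem \ref{pumping_lemma}.
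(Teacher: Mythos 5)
Your overall instinct---greedy $s_e$-descent controlled by Parys's pumping lemma through a higher-order pushdown representation---is the same as the paper's, but your plan breaks down at exactly the points you flag as delicate, and the missing idea is the one that makes the paper's proof work. Theorem \ref{pumping_lemma} speaks only of paths marked by a \emph{fixed} regular language $L$ and starting at a given configuration; it has no mechanism for constraining where paths end. Your ``regular language describing these reversed $L_e$-paths restricted to ending below $b'$'' does not exist: ``ending $R_c$-below $b'$'' refers to a particular vertex that varies with $p_1$, and no automaton over $\mathbf{D}\sqcup\mathbf{D}^{-}$ can test it. Lemmas \ref{type_calculation1} and \ref{type_calculation2} do not repair this: they classify pairs into finitely many types, but the type of $(\text{endpoint},b')$ is not a function of the path from $x$ to the endpoint, so the moving threshold never becomes an automaton condition. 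The paper's solution is to change the \emph{graph} rather than the language: it builds an auxiliary graph $G'$ on $V_a$ in which ``$=s(\cdot,0)$'', ``$=s(\cdot,1)$'', ``consecutive cofinal elements'' and ``immediate $R_c$-successor'' are single edges, passes to the treegraph $\mathcal{T}_h(G')$ restricted to pairs so that the target is stored as the first component of each vertex, and adds $l$/$r$ loop edges recording the $R_c$-comparison of the two components; only then does it apply Theorem \ref{HOPDG}, to this new deterministic graph $H''$. There the comparison with the target is a genuine edge condition, and the entire greedy descent is a single path marked by one fixed regular language $L$.

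Two of your quantitative steps also fail as stated. The ``cheap'' depth bound is not cheap: in $T$ a single cofinal-sequence step from $w$ to $s_d(w,m)$ is realized by an $L_d$-marked path whose length is not bounded by anything elementary in $m$ and $\mathrm{dep}(w)$ (the $R_c$-order of endpoints of $L_d$-paths from $w$ need not correlate with path length), so $\mathrm{dep}(\rho^{s_d}_a(p_1))\le g(|p_1|)$ is unjustified; the paper measures lengths in $H''$, where each descent step costs a bounded number of edges, precisely to avoid this. More seriously, you invoke the pumping estimate once per descent step while the depth of the current point grows by an exponential at each step; since the number of steps is of the order of $|p_3|$---the very quantity being bounded---the resulting tower height grows with the descent and cannot be capped at a fixed $n$, and you offer no mechanism to invoke pumping ``only boundedly often''. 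The paper invokes Theorem \ref{pumping_lemma} exactly once: determinism of $H''$ together with the Bachmann property yields that there is \emph{exactly one} $L$-marked path from the configuration $(\rho^{s_d}_a(p_1),\rho^{s_e}_a(p_2))$, hence not infinitely many, so the contrapositive of the pumping lemma bounds the length of that unique path (which encodes the $|\cdot|$-least $p_3$) by $\beth_m(C_{\mathcal{A},L}(|p_1|+|p_2|+2))$, after reaching that configuration from the initial one by a path of length about $2|p_1|+|p_2|$. Without the pair/loop construction and this single application of pumping to the unique path, the argument cannot be completed along the lines you describe.
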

\begin{proof} We switch to a different graph $G'$. The set of vertices of $G'$ is $V_a$, which is the set of $a$ and all vertices $R_c$ below $a$. The set of colors of $G'$ is $\{d_0,d_1,d_2,e_0,e_1,e_2,o\}$.  We will describe how we construct $R_{d_0}$ $R_{d_1}$ and $R_{d_2}$ from $R_d$; we construct  $R_{e_0}$, $R_{e_1}$, and $R_{e_2}$ from $R_e$ in the same way and omit the description. For all $v_1,v_2\in V_a$ we put:
$$v_1R_{d_0}v_2\stackrel{\text{def}}{\iff} \mbox{$v_2$ is $R_c$-limit point and $s_d(v_2,0)=v_1$},$$
$$v_1R_{d_1}v_2\stackrel{\text{def}}{\iff} \mbox{$v_2$ is $R_c$-limit point and $s_d(v_2,1)=v_1$},$$
$$v_1R_{d_2}v_2\stackrel{\text{def}}{\iff} \mbox{$\exists n\ge 1, v_0\in V_a^{\mathrm{lim}}(s_d(v_0,n)=v_1$ and $s_d(v_0,n+1)=v_2)$},$$
$$v_1R_{o}v_2\stackrel{\text{def}}{\iff} \mbox{$v_2$ is immediate $R_c$-successor of $v_1$}.$$

Because, every $\mathbf{Graph}_n$ is closed under monadic interpretations with domain restriction \cite{CarWoh03}, the graph $G'$ lies in Caucal hierarchy. Note that graph $\mathcal{R}(G')$ is deterministic for all the colors save $d_0$ and $e_0$. It could be easily shown using the fact that $s_d$ have Bachmann property and hence for every $v\in V_a$ there is at most one $v'\in V_a^{\mathrm{lim}}$ such that $s_d(v',n)=v$, for $n\ge 1$ (the same holds for $s_e$).  Also note that the restriction of $R_c$ to $V_a$ is monadically definable in $G'$.

We consider some fresh color $h$ and the graph $\mathcal{R}(\mathcal{T}_h(G'))$. Clearly, $\mathcal{R}(\mathcal{T}_h(G'))$ is deterministic  for all the colors save $d_0$ and $e_0$. From Theorem \ref{treegraph_theorem} it follows that $\mathcal{R}(\mathcal{T}_h(G'))$ lies in Caucal hierarchy.  We consider subgraph $H$ of $\mathcal{R}(\mathcal{T}_h(G'))$ that consists of all sequences of length at most 2, i.e. root copy of $\mathcal{R}(G')$ and copies one step below it. Clearly $H$ lies in Caucal hierarchy. We consider fresh colors $l,r$, add $l$-loops to every vertex $(v_1,v_2)$ of $H$ such that $v_1 R_c v_2$, and add $r$-loops to every vertex $(v_1,v_2)$ of $H$ such that $v_2 R_c v_1$ or $v_2=v_1$; thus we obtain graph $H'$ with colors $\{d_0,d_1,d_2,e_0,e_1,e_2,o,h,d_0^{-},d_1^{-},d_2^{-},e_0^{-},e_1^{-},e_2^{-},o^{-},h^{-},l,r\}$. It is easy to see that the sets of vertices where we have added $l$-loops and $r$-loops are monadically definable in $H$ and therefore $H'$ lies in Caucal hierarchy. Also, clearly $H'$ is deterministic  for all the colors save $d_0$ and $e_0$.. We build $H''$ from $H'$ by adding color $b$, all the edges of   the form $((u),b,(u,a))$, and removing colors $d_0,d_1,d_2,e_0,e_1,e_2,o$. Clearly, $H''$ lies in Caucal hierarchy, is deterministic, and all vertices are reachable from $(a)$.

  We consider regular language 
$$L=(l\{o^{-}, e_0^{-},e_1^{-}(re_2^{-})^{\star}\})^{\star}h^{-}.$$
It is easy  to see for any two $v_1,v_2\in V_a$, $v_1 R_c v_2$ there exist exactly one path from $(v_1,v_2)$ to $(v_1)$ in $H''$ marked by some $\alpha\in L$. Moreover the corresponding $\alpha$ could be calculated from the $|\cdot|$-least path $p\in \mathbf{Path}^{s_e}_{v_2}$, $p=(n_1,\ldots,n_k)$ such that $\rho_{v_2}^{s_e}(v_2)=v_1$:
$\alpha$ is the word $\alpha_1\ldots\alpha_kh^{-}$, where, for $1\le i\le k$ the word $\alpha_i$ is equal to  the word $l o^{-}$ if $n_i=0$, is equal to the word $l e_0^{-}$ if $n_i=1$ and is equal to $le_1^{-}(re_2^{-})^{n_i-1}$ if $n_i\ge 1$. 

We consider some $p_1,p_2$ as in the lemma formulation. We consider $|\cdot|$-least $p_3\in \mathbf{Path}^{s_e}_{\rho^{s_e}_a(p_2)}$ such that $\rho^{s_e}_{\rho^{s_e}_a(p_2)}(p_3)=\rho^{s_d}_a(p_1)$. From the consideration above we see that there exists exactly one path from $(\rho^{s_d}_a(p_1),\rho^{s_e}_a(p_2))$ to $(\rho^{s_d}_a(p_1))$ marked by some $\alpha_0\in L$ and $|p_3|\le|\alpha_0|$.
It is easy to see that there is path in $H''$ from $(a)$ to $(\rho^{s_d}_a(p_1))$ of the length at most $|p_1|$, there is path in $H''$ from  $(\rho^{s_d}_a(p_1))$ to  $(\rho^{s_d}_a(p_1),\rho^{s_e}_a(p_2))$ of the length at most $|p_2|+1$. 

 Using Theorem \ref{HOPDG} we obtain some $m$-pds $\mathcal{A}$ such that $\varepsilon$-contraction of configuration graph of $\mathcal{A}$ is isomorphic to $H''$. Clearly, we can assume that initial configuration of  $\mathcal{A}$ corresponds to vertex $(a)$, because we can always modify $\mathcal{A}$ if it wasn't initially true. There is path in $\varepsilon$-contraction of configuration graph of $\mathcal{A}$ from initial configuration of $\mathcal{A}$ to the configuration corresponding to  $(\rho^{s_d}_a(p_1),\rho^{s_e}_a(p_2))$ of the length at most $|p_2|+2|p_1|+1$.   Now we apply Theorem \ref{pumping_lemma} to $\mathcal{A}$, language $L$ and the configuration corresponding to  $(\rho^{s_d}_a(p_1),\rho^{s_e}_a(p_2))$  and see that $|\alpha_0|<\beth_m(C_{\mathcal{A},L}(|p_1|+|p_2|+2))$ (note that every deterministic graph is finitely-branching and thus we can apply Theorem \ref{pumping_lemma}). Hence $|p_3|<\beth_m(C_{\mathcal{A},L}(|p_2|+2|p_1|+2))$. Hence we can choose $n$ that doesn't depend on $p_1$ and $p_2$ such that $|p_3|\le \beth_n(|p_1|+|p_2|)$. \end{proof}

The following two lemmas could be proved in the same fashion as the previous lemma with only slight modifications:
\begin{lemma} \label{path_transform_2} Suppose graph $G=(\mathbf{C},V,U)$ lies in Caucal hierarchy, $G$ is a well-ordering for color $c\in \mathbf{C}$, $d,e\in \mathbf{C}$ give systems of cofinal sequences for $R_c$ with Bachmann property, $a\in V$, and $f$ is embedding of $(V,R_c)$ onto initial segment of ordinals.  Then there exists $n$ such that for every $p_1\in\mathbf{Path}^{s_d}_a$, $p_2\in \mathbf{Path}^{s_e}_a$ with $\rho^{s_d}_a(p_1)R \rho^{s_e}_a(p_2)$  there exists $p_3\in\mathbf{Path}^{s_e}_{\rho^{s_e}_a(p_2)}$ such that $\rho^{s_e}_{\rho^{s_e}_a(p_2)}(p_3)=f^{-1}(f(\rho^{s_d}_a(p_1))+1)$ and $|p_3|\le\beth_n(|p_1|+|p_2|)$.
\end{lemma}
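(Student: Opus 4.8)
The plan is to rerun the proof of Lemma~\ref{path_transform_1} essentially verbatim, redirecting the descent so that it ends at the immediate successor of $\rho^{s_d}_a(p_1)$ rather than at $\rho^{s_d}_a(p_1)$ itself. Put $t=\rho^{s_d}_a(p_1)$. The hypothesis $t\,R\,\rho^{s_e}_a(p_2)$ together with $\rho^{s_e}_a(p_2)\in V_a$ gives $t\,R\,a$, so $t$ is not the largest element of $V_a$ and its immediate $R_c$-successor $t'=f^{-1}(f(t)+1)$ exists and lies in $V_a$; moreover $t'\,R\,\rho^{s_e}_a(p_2)$ or $t'=\rho^{s_e}_a(p_2)$. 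Hence there is $p_3\in\mathbf{Path}^{s_e}_{\rho^{s_e}_a(p_2)}$ with $\rho^{s_e}_{\rho^{s_e}_a(p_2)}(p_3)=t'$, and I take $p_3$ to be the $|\cdot|$-least such path. If $t'=\rho^{s_e}_a(p_2)$ then $p_3=()$ and the bound is trivial, so I may assume $t'\,R\,\rho^{s_e}_a(p_2)$.

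Next I would reconstruct the graphs $G'$, $\mathcal{T}_h(G')$, $H$, $H'$, $H''$ exactly as before, with the single change that the forward successor color $o$ is retained in $H''$. Since the regular language $L=(l\{o^-,e_0^-,e_1^-(re_2^-)^\star\})^\star h^-$ never reads the forward color $o$, retaining it adds no new $L$-marked paths, and it preserves both determinism of $\mathcal{R}(\cdot)$ away from $d_0,e_0$ (in a well-ordering every element has at most one immediate predecessor and at most one immediate successor) and reachability from $(a)$. The central fact of Lemma~\ref{path_transform_1} is then used unchanged: for $v_1\,R_c\,v_2$ there is a unique $H''$-path from $(v_1,v_2)$ to $(v_1)$ marked by some $\alpha\in L$, and $\alpha$ encodes the $|\cdot|$-least element of $\mathbf{Path}^{s_e}_{v_2}$ reaching the first coordinate $v_1$. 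The only new ingredient is the root-level navigation: from $(a)$ I descend along $p_1$ to $(t)$ in at most $|p_1|$ steps, take the single retained forward $o$-edge to $(t')$, then use the $b$-edge to $(t',a)$ and descend along $p_2$, reaching $(t',\rho^{s_e}_a(p_2))$. Applying the quoted fact with $(v_1,v_2)=(t',\rho^{s_e}_a(p_2))$ yields a unique $\alpha_0\in L$ encoding $p_3$, now reaching the first coordinate $t'$, with $|p_3|\le|\alpha_0|$.

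The length estimate transfers to higher-order pushdown stores exactly as in the previous proof: by Theorem~\ref{HOPDG} I realize $H''$, which is deterministic and hence finitely branching, as the $\varepsilon$-contraction of the configuration graph of some $m$-pds $\mathcal{A}$ with initial configuration $(a)$; the vertex $(t',\rho^{s_e}_a(p_2))$ is reached from it by a path of length at most $2|p_1|+|p_2|+3$ (one more than in Lemma~\ref{path_transform_1}, because of the extra successor edge). Invoking Theorem~\ref{pumping_lemma} with $\mathcal{A}$ and $L$ bounds $|\alpha_0|$, and therefore $|p_3|$, by $\beth_m(C_{\mathcal{A},L}(2|p_1|+|p_2|+3))$, from which a single $n$ independent of $p_1,p_2$ with $|p_3|\le\beth_n(|p_1|+|p_2|)$ is extracted.

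The one point that really needs checking is that moving the target to $t'$ does not disturb the correspondence between $L$-words and $|\cdot|$-least descent paths. As that correspondence and the comparison loops $l,r$ of $L$ are all phrased relative to the first coordinate, it suffices that $t'$ occupy the first coordinate, which is exactly what the extra forward $o$-step arranges; no change to $L$, to the loops, or to the pumping estimate is needed beyond additive constants. Equivalently --- and this is presumably the ``slight modification'' intended in the text --- one keeps $t$ in the first coordinate but re-bases the loops $l,r$ on the immediate $R_c$-successor of the first coordinate and replaces the terminal $h^-$ by a fresh monadically definable pop color $g$ with edges $((v_1,v_1'),g,(v_1))$, where $v_1'$ is the immediate $R_c$-successor of $v_1$, so that the descent halts one step higher.
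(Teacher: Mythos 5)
Your proposal is correct and takes essentially the same route as the paper: the paper's entire proof of this lemma is the remark that it ``could be proved in the same fashion as the previous lemma with only slight modifications,'' and your modification---redirecting the descent target to the immediate successor $t'=f^{-1}(f(\rho^{s_d}_a(p_1))+1)$ by retaining the forward successor color $o$ (or, in your alternative, re-basing the loops $l,r$ and the terminal pop on the successor of the first coordinate)---is precisely such a modification, leaving the auxiliary graphs, the language $L$, the uniqueness of $L$-marked paths, and the pumping estimate of Lemma~\ref{path_transform_1} intact. The trivial case $t'=\rho^{s_e}_a(p_2)$ and the determinism/reachability checks you include are exactly the points that need verifying, and they go through as you say.
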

\begin{lemma} \label{path_transform_3} Suppose graph $G=(\mathbf{C},V,U)$ lies in Caucal hierarchy, $G$ is a well-ordering for color $c\in \mathbf{C}$, $d\in \mathbf{C}$ give systems of cofinal sequences for $R_c$ with Bachmann property, $a\in V$, and $f$ is embedding of $(V,R_c)$ onto initial segment of ordinals. Then there exists $n$ such that for every $p\in\mathbf{Path}^{s_d}_a$ there exist $v\in V^{\mathrm{lim}}\cup\{f^{-1}(0)\}$ and $k<\beth_n(|p|)$ such that $\rho^{s_d}_{a}(p)=f^{-1}(f(v)+k)$.
\end{lemma}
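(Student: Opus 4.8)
The plan is to mirror the construction in the proof of Lemma~\ref{path_transform_1}, specialized to a single cofinal system and a much simpler ``target descent''; in fact the treegraph and the second coordinate become unnecessary, since everything happens along one monotone descent in $(V_a,R_c)$. As there, apply Theorem~\ref{Inv_rat_theorem} to realize $G$ over a deterministic tree and build an automaton recognizing $L_c$ and $L_d$, then pass to the auxiliary graph $G'$ on the domain $V_a$ (the set of $a$ together with all vertices $R_c$-below $a$) carrying the colors $d_0,d_1,d_2,o$ defined exactly as in Lemma~\ref{path_transform_1} from $R_d$ and the immediate-successor relation of $R_c$. In addition I would add one fresh marker color $\ell$ whose edges are the loops $v\,R_\ell\,v$ placed exactly at the points of $V_a^{\mathrm{lim}}\cup\{f^{-1}(0)\}$; both the limit points and the minimum are monadically definable, so $\ell$ is definable and, by closure of the hierarchy under monadic interpretation with domain restriction, $G'$ stays in the Caucal hierarchy.

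Next I would record that, by the Bachmann property, the relevant colors are functional: since for $n\ge 1$ there is at most one limit $v'$ with $s_d(v',n)=v$ and $s_d$ is strictly monotone, the reversed colors $d_0^{-},d_1^{-},o^{-}$ and the forward color $d_2$ are all deterministic. Keeping only these functional colors together with $\ell$ yields a deterministic, hence finitely branching, graph $H$ lying in the hierarchy in which every vertex is reachable from $a$. The role of $H$ is twofold. On one hand, the endpoint $\alpha:=\rho^{s_d}_a(p)$ is reached from $a$ by a single monotone descent that simply follows the path code $p$: a limit-digit $n_i$ is realized by $d_0^{-}$, by $d_1^{-}$, or by $d_1^{-}$ followed by forward $d_2$-steps, and a successor-digit by one $o^{-}$-step, so this descent has length at most $C|p|$ for a constant $C$ independent of $p$. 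On the other hand, writing $\alpha=f^{-1}(f(v)+k)$ where $v$ is the largest element of $V_a^{\mathrm{lim}}\cup\{f^{-1}(0)\}$ that is $R_c$-below-or-equal to $\alpha$, the number $k$ is exactly the length of the descent from $\alpha$ to $v$ along immediate predecessors.

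The crux is to capture this second descent by a regular language with a \emph{unique} witnessing path, so that the contrapositive of the pumping lemma bounds $k$. I would take $L'=(o^{-})^{\star}\ell$. Because $o^{-}$ is functional and the $\ell$-loop is available only at limit points and at $f^{-1}(0)$, while all strictly intermediate points are nonzero successors, the only path from $\alpha$ marked by a word of $L'$ is $\alpha\to\alpha{-}1\to\dots\to v$ followed by the single $\ell$-loop; in particular it is unique and has length $k+1$. Passing via Theorem~\ref{HOPDG} to an $N$-pds $\mathcal{A}$ whose $\varepsilon$-contraction is isomorphic to $H$, and arranging the initial configuration to correspond to $a$, the configuration $w$ corresponding to $\alpha$ is reachable by a path of length $M\le C'|p|$. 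By Theorem~\ref{pumping_lemma}, if $k+1\ge\beth_{N-1}((M+1)C_{\mathcal{A},L'})$ then there would be infinitely many $L'$-paths from $w$, contradicting uniqueness; hence $k<\beth_{N-1}((C'|p|+1)C_{\mathcal{A},L'})$, and absorbing the linear factor into one extra $\beth$-level yields an $n$ independent of $p$ with $k<\beth_n(|p|)$, as required.

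The main obstacle, and essentially the only step requiring care, is the uniqueness of the $L'$-descent together with finite branching of $H$: both hinge on turning the cofinal colors into functional ones, which is exactly where the Bachmann hypothesis is used, and on placing the marker $\ell$ so that it fires precisely on $V_a^{\mathrm{lim}}\cup\{f^{-1}(0)\}$ and nowhere in between. Everything else is a routine repetition of the bookkeeping already carried out in Lemma~\ref{path_transform_1}, with the pumping lemma applied to the successor-descent $(o^{-})^{\star}\ell$ in place of the cofinal-descent language used there.
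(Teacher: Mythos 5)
Your proof is correct and is essentially the argument the paper intends: the paper only remarks that Lemma \ref{path_transform_3} ``could be proved in the same fashion as'' Lemma \ref{path_transform_1}, and your construction --- the auxiliary graph on $V_a$ with colors $d_0,d_1,d_2,o$ made functional via the Bachmann property, a definable marker color placed exactly on $V_a^{\mathrm{lim}}\cup\{f^{-1}(0)\}$, and Theorem \ref{pumping_lemma} applied through Theorem \ref{HOPDG} to the language $(o^{-})^{\star}\ell$, whose witnessing path from $\rho^{s_d}_a(p)$ is unique of length $k+1$ --- is precisely such a modification. Your observation that the treegraph and the second coordinate can be dropped (since only one endpoint varies here and it is flagged by the $\ell$-loop rather than by a second tree level) is a sound simplification of the template of Lemma \ref{path_transform_1}, not a gap.
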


\begin{lemma}\label{coherent_dom} Suppose $(A,R)$ is a well-ordering and $s$ is a system of cofinal sequences with Bachmann property, $b R a\in A$,  $p\in \mathbf{Path}^s_a$, and $\rho^s_a(p)=b$. Then for all $x\ge |p|$ we have $F^s_a(x)\ge F^s_b(x)$.
\end{lemma}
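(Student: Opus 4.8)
The plan is to prove, by transfinite induction on $a$ along $R$, a statement slightly more general than the lemma, in which the metric bound ``$|p|\le x$'' is replaced by the weaker and more convenient requirement that every fundamental‑sequence index occurring in a descent from $a$ to $b$ be at most $x$. Concretely, for $x\ge 1$ call $b$ \emph{$x$-reachable} from $a$ if there is a finite descending chain $a=c_0,c_1,\dots,c_r=b$ in which each $c_{i+1}$ is either the immediate $R$-predecessor of a successor point $c_i$, or $c_{i+1}=s(c_i,j)$ for a limit $c_i$ and some $j\le x$. Since every component $n_i$ of a path $p$ satisfies $n_i\le|p|$, the hypothesis $|p|\le x$ makes $b=\rho^s_a(p)$ $x$-reachable from $a$; and $|p|\ge 1$ whenever $p$ is non-empty, so (the non-trivial part of) the lemma has $x\ge 1$. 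It therefore suffices to show: \emph{if $x\ge1$ and $b$ is $x$-reachable from $a$, then $F^s_b(x)\le F^s_a(x)$.}

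Before the main induction I would record the auxiliary fact that $F^s_a(y)\ge y$ for all $a$ and $y$, by a routine transfinite induction on $a$ (at a successor, iterating a function that is $\ge$ the identity keeps values above the input; at a limit it is immediate from the defining equation). In particular $F^s_{c+1}(x)=F^s_c(F^s_c(\cdots F^s_c(x)\cdots))\ge F^s_c(x)$ for every $x\ge 1$. The successor case of the main induction is then easy: a descent from $c+1$ first passes to $c$ and afterwards $x$-reaches $b$, so the induction hypothesis at $c$ gives $F^s_b(x)\le F^s_c(x)$, and the auxiliary fact gives $F^s_c(x)\le F^s_{c+1}(x)$.

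In the limit case $a=l$ the descent first moves to some $s(l,i)$ with $i\le x$ and then $x$-reaches $b$, so the induction hypothesis at $s(l,i)$ yields $F^s_b(x)\le F^s_{s(l,i)}(x)$ and everything reduces to the single comparison
$$F^s_{s(l,i)}(x)\ \le\ F^s_{s(l,x)}(x)\ =\ F^s_l(x),\qquad i\le x.$$
I would derive this from the induction hypothesis applied at the strictly smaller point $s(l,x)$, for which it suffices to prove that $s(l,i)$ is $x$-reachable from $s(l,x)$. I descend the fundamental sequence of $l$ one index at a time, so the task becomes: for each $j$, reach $s(l,j-1)$ from $s(l,j)$ by a descent that uses only index $0$ (and predecessor) steps. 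Here Bachmann property enters: whenever $c$ is a limit with $s(l,j-1)<c\le s(l,j)$, applying it to the limit $l$ at index $j-1$ gives $s(l,j-1)\le s(c,0)$, so the least index whose fundamental value does not undershoot the target $s(l,j-1)$ is $0$. Thus a greedy descent from $s(l,j)$ toward $s(l,j-1)$ never needs an index above $0$, stays $\ge s(l,j-1)$ throughout, and by well-foundedness of $R$ terminates exactly at $s(l,j-1)$; concatenating these micro-descents for $j=x,x-1,\dots,i+1$ exhibits $s(l,i)$ as $x$-reachable from $s(l,x)$.

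The main obstacle is precisely this last step. The delicate point is that passing from ``$|p|\le x$'' to ``indices $\le x$'' is essential, because the telescoping descent from $s(l,x)$ down to $s(l,i)$ is a \emph{new} chain, not a sub-descent of the given path, and it may be arbitrarily long; one cannot control its length, only the size of its indices. Bachmann property is exactly what guarantees that consecutive members of a fundamental sequence are separated only by index‑$0$ steps, keeping the new descent within the ``$x$-reachable'' relation so that the induction hypothesis at $s(l,x)$ can be invoked at the same argument $x$. Without Bachmann property the gap between $s(l,j)$ and $s(l,j-1)$ could force large indices, and the argument would collapse.
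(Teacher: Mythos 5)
Your proof is correct, and its skeleton matches the paper's: transfinite induction on $a$, an easy successor case, and a limit case that reduces to comparing $F^s_{s(l,i)}$ with $F^s_{s(l,x)}=F^s_l$ by showing that fundamental-sequence members of smaller index are reachable from those of larger index using only predecessor and index-$0$ steps --- precisely the Schmidt-coherence consequence of the Bachmann property. The difference is one of decomposition. The paper keeps the norm-based hypothesis $|p|\le x$, appeals to the step-down relation $\prec^s$ together with a separately asserted (and only sketched) fact that $d\prec^s c$ implies $F^s_d(y)\le F^s_c(y)$ for every $y$, and dismisses the coherence $s(a,u_1)\prec^s s(a,x)$ as ``clear'', implicitly leaning on the equivalence of the Bachmann property with Schmidt-coherence that it cites to Rose and Schmidt. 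You instead fold everything into a single induction by weakening the hypothesis from a bound on the path norm to a bound on the indices occurring in a descent ($x$-reachability), so that step-down reachability is just the case of index bound $0$ and no auxiliary lemma is needed; and you prove the coherence step inline by the greedy descent, where Bachmann applied to a limit $c$ with $s(l,j-1)<c\le s(l,j)$ gives $s(c,0)\ge s(l,j-1)$, so the descent stays above the target and terminates there by well-foundedness. Your version buys self-containedness and makes explicit the point the paper glosses over --- the telescoping chain from $s(l,x)$ down to $s(l,i)$ may be arbitrarily long, but since it uses only index $0$ the induction hypothesis can be invoked at the same argument $x$; the paper's version is shorter granted the cited background on step-down relations.
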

\begin{proof} We prove the lemma by transfinite induction on $a$. The case of subsequent $a$ is obviously true if $a$ is successor of $b$ and trivially follows from induction assumption otherwise. If $a$ is limit then $F^s_a(x)=F^s_{s(a,x)}(x)$. Clearly the first point in path $p=(u_1,\ldots,u_k)$, i.e $s(a,u_1)$, is step-down reachable from $s(a,x)$ and hence $F^s_{s(a,x)}(y)>F^s_{s(a,u_1)}(y)$ for every $y$. Here we use the fact  that for every $c$ and step-down reachable from $c$ point $d$ we have $F^s_{c}(y)\ge F^s_{d}(y)$, for every $y$; this fact could be easily proved by transfinite induction.
\end{proof}

\begin{theorem} \label{Fast_growing_equivalency_1}Suppose graph $G=(\mathbf{C},V,U)$ lies in Caucal hierarchy, $G$ is a well-ordering for color $c\in \mathbf{C}$, $d,e\in \mathbf{C}$ give systems of cofinal sequences for $R_c$, and $s_d,s_e$ have Bachmann property. Then for every $a R_c b\in V$ there exists $N$ such that for all $x>N$ we have $F^{s_e}_b(x)> F^{s_d}_{a}(x)$.
\end{theorem}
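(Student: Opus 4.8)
The plan is to prove, by transfinite induction on $a$, the sharper statement $R(a)$: writing $a^{+}$ for the $R_c$-immediate successor of $a$, one has $F^{s_e}_{a^{+}}(x) > F^{s_d}_a(x)$ for all sufficiently large $x$. The theorem follows from $R(a)$ immediately: given $a\,R_c\,b$ we have $a^{+}\,R_c\,b$ or $a^{+}=b$, so $a^{+}$ is the endpoint of some $p\in\mathbf{Path}^{s_e}_b$, and Lemma \ref{coherent_dom} gives $F^{s_e}_b(x)\ge F^{s_e}_{a^{+}}(x)$ for $x\ge|p|$; together with $R(a)$ this yields $F^{s_e}_b(x) > F^{s_d}_a(x)$ eventually. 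The point of inducting on $a^{+}$ rather than on $a$ is that $F^{s_e}_{a^{+}}(x)=(F^{s_e}_a)^{(x)}(x)$ carries a spare iteration, which I will need to absorb an off-by-one in the limit case.

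The base and successor steps require no path transfer. Below $\omega$ the segment $\{v:v\,R_c\,a\}$ has no limit points, so $s_d,s_e$ are irrelevant there and $F^{s_d}_a=F^{s_e}_a$ is the usual finite-level function, whence $F^{s_e}_{a^{+}}(x)=(F^{s_e}_a)^{(x)}(x)>F^{s_d}_a(x)$. For $a=c{+}1$ with $c\succeq\omega$, the hypothesis $R(c)$ gives $F^{s_e}_{c+1}(z)>F^{s_d}_c(z)$ for $z\ge N$, and a termwise induction on the iteration count (all intermediate values staying $\ge N$, using monotonicity of $F^{s_e}_{c+1}$ and $F^{s_d}_c$) propagates this through the $x$-fold iterates to give $F^{s_e}_{a^{+}}(x)=(F^{s_e}_{c+1})^{(x)}(x)\ge(F^{s_d}_c)^{(x)}(x)=F^{s_d}_a(x)$, strictly for large $x$.

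The limit case is the heart of the matter and the main obstacle. Here $F^{s_d}_a(x)=F^{s_d}_v(x)$ with $v=s_d(a,x)$, and the trouble is that $v$ depends on $x$ and runs through infinitely many points, so the (non-uniform) thresholds of the hypotheses $R(v)$ cannot be applied at the fixed argument $x$. The device I would use to control this is Lemma \ref{path_transform_1} with the \emph{empty} $e$-path as $p_2$ and $p_1=(x{+}1)$, so that $\rho^{s_d}_a(p_1)=v$: it produces an $e$-path from $a$ down to $v$ of length at most $\beth_n(x{+}2)$ for a single $n$ independent of $x$. Since $a\succeq\omega$, the function $F^{s_e}_a$ eventually dominates every fixed tower $\beth_n$ (as $\omega\preceq a$ is $s_e$-reachable from $a$ by a fixed path, Lemma \ref{coherent_dom} gives $F^{s_e}_a(x)\ge F^{s_e}_\omega(x)\ge F_x(x)$ at the finite level), so the argument $Y=(F^{s_e}_a)^{(x-1)}(x)$ manufactured by the iteration exceeds that $e$-distance $\beth_n(x{+}2)$. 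Coherent domination along the $e$-path $a\to v$ can therefore be invoked at the huge argument $Y$, giving $(F^{s_e}_a)^{(x)}(x)=F^{s_e}_a(Y)\ge F^{s_e}_v(Y)$; a termwise comparison of the iterates of $F^{s_e}_a$ and $F^{s_e}_v$ (legitimate once the arguments have passed $\beth_n(x{+}2)$) is then fed into $R(v)$ to overtake $F^{s_d}_v(x)$.

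I expect the genuinely delicate point to be exactly this last bookkeeping. Each descent from $a$ to $v=s_d(a,x)$ in the $e$-system may cost an argument of size up to $\beth_n(x{+}2)\gg x$, i.e.\ essentially one full application of the fast-growing $F^{s_e}_a$, so one must check that the remaining iterations still suffice and that the resulting off-by-one in the iteration count is swallowed by the spare iteration built into $R(a^{+})$. What makes the argument go through despite the $x$-dependence of $v$ is the conjunction of two facts already available: the \emph{uniformity} of the exponent $n$ over all $v=s_d(a,x)$, guaranteed by Lemma \ref{path_transform_1}, and the super-elementary growth of $F^{s_e}_a$ for $a\succeq\omega$, which renders the non-uniform thresholds of the hypotheses $R(v)$ harmless.
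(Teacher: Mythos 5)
Your reduction to $R(a)$ and the base and successor cases are fine, but the limit case has a genuine gap, and it is exactly at the point you flag as ``delicate bookkeeping.'' After descending from $a$ to $v=s_d(a,x)$, everything you can extract from Lemma \ref{path_transform_1} plus Lemma \ref{coherent_dom} stays \emph{inside the $e$-hierarchy}: the uniform bound $\beth_n(x+2)$ on the $e$-path length controls where coherent domination $F^{s_e}_a(z)\ge F^{s_e}_v(z)$ (or $\ge F^{s_e}_{v^+}(z)$) kicks in, but it says nothing about where the \emph{cross-hierarchy} hypothesis $R(v)\colon F^{s_e}_{v^+}(z)>F^{s_d}_v(z)$ for $z>N_v$ becomes usable. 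The threshold $N_v$ is completely uncontrolled by your induction: as $x$ grows, $v=s_d(a,x)$ runs through infinitely many points, and nothing prevents $N_{s_d(a,x)}$ from growing faster than any fixed function of $x$, in particular faster than $Y=(F^{s_e}_a)^{(x-1)}(x)$. Worse, the huge argument $Y$ cannot even be brought to bear on $R(v)$: iterations cannot be purchased with argument size, so from $(F^{s_e}_a)^{(x)}(x)$ the termwise comparison of iterates only yields a lower bound of the form $F^{s_e}_{v^+}(x-1)$ (an $x$-fold iterate of $F^{s_e}_v$ can never dominate $F^{s_e}_{v^+}(w)=(F^{s_e}_v)^{(w)}(w)$ for $w\gg x$, however large its starting argument). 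So $R(v)$ would have to be applied at an argument of size about $x$, where it is not available. The super-elementary growth of $F^{s_e}_a$ is irrelevant to this obstruction.

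The missing idea is the one the paper's proof is built on: the induction statement itself must be made \emph{uniform}, with an explicit threshold expressed in path data rather than an existential ``for sufficiently large $x$.'' The paper fixes an anchor $a$, takes a single $n$ serving Lemmas \ref{path_transform_1}--\ref{path_transform_3}, and proves by transfinite induction the statement: for every pair $v_1\,R_c\,v_2$ below the anchor and all path codes $p_1\in\mathbf{Path}^{s_d}_a$, $p_2\in\mathbf{Path}^{s_e}_a$ ending at $v_1$, $v_2$ respectively, $F^{s_e}_{v_2}(x)>F^{s_d}_{v_1}(x)$ holds for \emph{all} $x\ge\beth_n(|p_1|+|p_2|)$. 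Because the threshold is an explicit function of the path lengths, and the varying point $s_d(a,x)$ has a $d$-path of length about $x$, the hypothesis remains applicable at limit stages (the paper compares the two sequences at a common limit point, $s_d(v_4,y)\,R_c\,s_e(v_4,\beth_n(\beth_n(y)))$, and absorbs the index blow-up with one extra successor step). Your non-uniform $R(\cdot)$ cannot be repaired by choosing larger arguments; it has to be replaced by a quantitative statement of this kind before the transfinite induction can pass through limit points.
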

\begin{proof} We denote by $f$ the embedding of $(V,R_c)$ onto the initial segment of ordinals. 

 Let us fix any $a\in V$. We find $n$ that is the maximal of three $n$-s provided by Lemma \ref{path_transform_1} and Lemma \ref{path_transform_2} Lemma \ref{path_transform_3}. We prove by transfinite induction on $v_2 R_c a$ that for all $v_1R_c v_2$, $p_1\in\mathbf{Path}^{s_e}_{a}$, $p_2\in\mathbf{Path}^{s_d}_{a}$ where $\rho^{s_d}_{v_0}(p_1)=v_1$ and $\rho^{s_e}_{v_0}(p_2)=v_2$ we have $F^{s_d}_{v_2}(x)>F^{s_e}_{v_1}(x)$, for all $x\ge \beth_n(|p_1|+|p_2|)$.

 There exists a path $p_3\in\mathbf{Path}^{s_e}_{v_2}$ such that $\rho^{s_e}_{v_2}(p_3)=f^{-1}(f(v_1)+1)=v_3$ and $|p_3|<\beth_n(|p_1|+|p_2|)$ by  Lemma \ref{path_transform_2}. Hence by Lemma \ref{coherent_dom} for all $y\ge \beth_n(|p_1|+|p_2|)$ we have $F^{s_e}_{v_2}(y)\ge F^{s_e}_{v_3}(y)$. If there are no limit points below $v_3$ then we are done because $F^{s_e}_{v_3}(y)=F^{s_d}_{v_3}(y)> F^{s_d}_{v_1}(x)$, for every $x$.

 Further we assume that there are limit points below $v_1$. Assume that $v_1=f^{-1}(f(v_4)+k)$, where $v_4$ is a limit point; note that from Lemma \ref{path_transform_1}  it follows that $k\le\beth_n(|p_1|)$. 

We claim that $s_d(v_4,y)R_c s_e(v_4,\beth_n(\beth_n(y)))$ for all $y>\beth_n(\beth_n(|p_1|+|p_2|))$. Indeed, let us consider some $y>\beth_n(\beth_n(|p_1|+|p_2|))$. There is  $q_1\in \mathbf{Path}^{s_d}_a$ with $\rho_a^{s_d}(q_1)=s_d(v_4,y)$  and $|q_1|<y+k+|p_1|\le y+\beth_n(|p_1|+|p_2|)$ and there is $q_2\in \mathbf{Path}^{s_e}_a$ with $\rho_a^{s_e}(q_2)=v_4$  and $|q_2|=k+|p_3|+1\le \beth_n(\beth_n(|p_1|+|p_2|))$. Therefore from Lemma \ref{path_transform_1} it follows that there exists $q_3\mathbf{Path}^{s_e}_{v_4}$ with $\rho_a^{s_e}(q_3)=s_d(v_4,y)$ and $|q_3|<\beth_n(|q_1|+|q_2|)<\beth_n(3y)<\beth_n(\beth_n(y))$. Let $m_1$ denote the first element of sequence $q_3$. Clearly, $m_1<\beth_n(\beth_n(y))$ and either $s_d(v_4,y)R_c s_e(v_4,m_1))$ or $s_d(v_4,y)= s_e(v_4,m_1))$. Hence, because  $s_e(v_4,m_1))R_cs_e(v_4,\beth_n(\beth_n(y)))$ we have $s_d(v_4,y)R_c s_e(v_4,\beth_n(\beth_n(y)))$. 

Now we see that from induction hypothesis it follows that $F^{s_d}_{v_4}(y)<F^{s_e}_{v_4}(\beth_n(\beth_n(y)))$, for all $y>\beth_n(\beth_n(|p_1|+|p_2|)$. Simple estimation shows that $F^{s_d}_{v_4}(y)<F^{s_e}_{f^{-1}(f(v_4)+1)}(y)$ for all $y>\beth_n(|p_1|+|p_2|)$. Hence $F^{s_d}_{f^{-1}(f(v_3)+i)}(y)<F^{s_e}_{f^{-1}(f(v_3)+i+1)}(y)$, for all $y>\beth_n(|p_1|+|p_2|)$ and natural $i$. Thus induction hypothesis holds for $v_2$.

Theorem clearly follows from induction hypothesis.
\end{proof}

\section{The Case of Ordinals Less than $\omega^\omega$}

 Theorem \ref{Fast_growing_equivalency_1} gives comparison of  two monadically definable systems of cofinal sequences on one structure. In general case we don't know how to use Theorem \ref{Fast_growing_equivalency_1} to compare systems of ordinal notations that arises from Caucal hierarchy to the standard system of ordinal notations for the same ordinal. But in the case of levels of fast-growing hierarchy below $\omega^{\omega}$ we can prove that ordinal notations from Caucal hierarchy give the same growth rate as the standard Cantor system of ordinal notations.

We give a definition of standard system of cofinal sequences $s_{\mathrm{st}}\colon \omega^\omega\cap \mathit{Lim} \times \omega \to \omega^\omega$ for ordinals less that $\omega^\omega$:
 $$s_{\mathrm{st}}(\alpha+\omega^{m+1},n)=\alpha+\omega^{m}n.$$
Note that from Cantor Theorem about normal forms of ordinals it follows that the definition is correct.

Note that the family of fast-growing functions $F^{s_{\mathrm{st}}}_{\alpha}(x)$, for $\alpha<\omega^{\omega}$, is an initial segment of Löb-Wainer hierarchy \cite{LobWai70}.

\begin{theorem}\label{Fast_growing_equivalency_2}  Suppose graph $G=(\mathbf{C},V,U)$ lies in Caucal hierarchy, $G$ is a well-ordering for color $c\in \mathbf{C}$, $d\in \mathbf{C}$ gives a system of cofinal sequences for $R_c$, $s_d$ have Bachmann property, and $f$ is an embedding of $(V,R_c)$ onto initial segment of ordinals. Then for all $\beta<\alpha<\omega^\omega$ there exists $N$ such that for all $x>N$ we have $F^{s_d}_{f^{-1}(\alpha)}(x)> F^{s_{\mathrm{st}}}_{\beta}(x)$ and $F^{s_{\mathrm{st}}}_{\alpha}(x)>F^{s_d}_{f^{-1}(\beta)}(x)$.
\end{theorem}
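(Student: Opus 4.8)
The plan is to reduce Theorem \ref{Fast_growing_equivalency_2} to the already-established machinery by treating the standard system $s_{\mathrm{st}}$ as a second system of cofinal sequences and comparing it against $s_d$ via the path-transfer lemmas. The essential difficulty is that $s_{\mathrm{st}}$ lives on the ordinals $\omega^\omega$, whereas $s_d$ lives on the graph $G$, so Theorem \ref{Fast_growing_equivalency_1}, which compares two systems on the \emph{same} structure, does not apply directly. My strategy is therefore to build, inside Caucal hierarchy, a single structure carrying both systems of cofinal sequences on the initial segment below $\omega^\omega$, and then invoke the growth-rate comparison there.

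First I would observe that the well-ordering $(\omega^\omega,<)$ together with $s_{\mathrm{st}}$ is itself realizable in Caucal hierarchy: $\omega^\omega<\varepsilon_0$, so by Braud's result there is a graph in the hierarchy with a monadically-definable well-ordering of type $\omega^\omega$, and by Theorem \ref{adding_cofinals} (and its Corollary) we may assume it carries a monadically-definable system of cofinal sequences with Bachmann property. The key point is that for ordinals below $\omega^\omega$ the standard sequences $s_{\mathrm{st}}(\alpha+\omega^{m+1},n)=\alpha+\omega^m n$ are themselves monadically definable from the Cantor normal form, so I would argue that the system produced by the Corollary can be taken to be $s_{\mathrm{st}}$ itself, or at least that $s_{\mathrm{st}}$ has Bachmann property and is monadically definable on that graph. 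Then I would form a common structure $G''$ in the hierarchy whose domain is the initial segment of $(V,R_c)$ below $f^{-1}(\omega^\omega)$ (this is monadically definable since it is just $\{v : f(v)<\omega^\omega\}$, and membership in levels is preserved under domain restriction by \cite{CarWoh03}), equipped with \emph{both} the color $d$ giving $s_d$ and a fresh color $d'$ giving (the pullback along $f$ of) $s_{\mathrm{st}}$.

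With both systems present on one structure, the proof of Lemmas \ref{path_transform_1}--\ref{path_transform_3} applies verbatim, yielding a single $n$ controlling path transfer in both directions between $s_d$-paths and $s_{\mathrm{st}}$-paths, with the same $\beth_n$ bound. I would then run the transfinite-induction argument of Theorem \ref{Fast_growing_equivalency_1} on this common structure to conclude $F^{s_d}_{f^{-1}(\alpha)}$ eventually dominates $F^{s_{\mathrm{st}}}_{\beta}$ and $F^{s_{\mathrm{st}}}_{\alpha}$ eventually dominates $F^{s_d}_{f^{-1}(\beta)}$ for $\beta<\alpha$, which is exactly the two inequalities claimed. Since $F^{s_{\mathrm{st}}}_\alpha$ for $\alpha<\omega^\omega$ is an initial segment of the Löb--Wainer hierarchy, this also delivers the comparison to $F_\alpha$ promised in the introduction.

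The step I expect to be the main obstacle is justifying that $s_{\mathrm{st}}$, or a monadically-definable Bachmann system carrying the same growth rate, really sits on a graph in the \emph{correct} level of Caucal hierarchy and can be merged with $G$ into one hierarchy structure. One must check that restricting $G$ to the segment below $\omega^\omega$ and adjoining the extra color keeps the graph in the hierarchy (using closure under monadic interpretation with domain restriction), and that the standard sequences are genuinely monadically definable there rather than merely computable; the Cantor-normal-form definability of $s_{\mathrm{st}}$ below $\omega^\omega$ is the crux. A cleaner alternative, avoiding the merge, would be to prove Lemmas \ref{path_transform_1}--\ref{path_transform_3} directly with $s_e$ replaced by $s_{\mathrm{st}}$ by exhibiting an explicit pushdown automaton computing path codes for the standard sequences and applying the pumping Lemma \ref{pumping_lemma} as before; this is essentially the same work relocated, and I would choose whichever route makes the definability bookkeeping shortest.
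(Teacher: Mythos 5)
Your overall reduction is the same one the paper uses: realize the pullback of $s_{\mathrm{st}}$ along $f$ as a monadically definable relation on (a restriction of) $G$ itself, so that $s_d$ and the standard system sit as two colors on a single graph in the hierarchy, and then invoke Theorem \ref{Fast_growing_equivalency_1}. (The opening detour through Braud's realization of a separate well-ordering of type $\omega^\omega$ buys nothing: an isomorphism between that external graph and an initial segment of $(V,R_c)$ is not accessible to MSO, and Theorem \ref{Fast_growing_equivalency_1} needs both systems on the \emph{same} structure, which your $G''$ construction then provides anyway.) The first genuine problem is your domain restriction: you assert that $\{v : f(v)<\omega^\omega\}$ is monadically definable ``since it is just $\{v : f(v)<\omega^\omega\}$'', which is not a justification, and the claim is false in general. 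If the order type of $(V,R_c)$ is, say, $\omega^\omega\cdot 2$ (already possible in $\mathbf{Graph}_2$), then by B\"uchi's periodicity theorem for the monadic theory of countable ordinals (for each quantifier rank $k$ there is a finite $g(k)$ such that ordinals $\geq\omega^{g(k)}$ with the same remainder modulo $\omega^{g(k)}$ are rank-$k$ equivalent; in particular the points $\omega^N$ and $\omega^\omega+\omega^N$ are rank-$k$ indistinguishable for large $N$) no monadic formula can carve out the segment below $\omega^\omega$ in the bare well-ordering. The repair is to observe that you never need that segment: $\beta<\alpha<\omega^\omega$ are fixed, so it suffices to restrict below a fixed $\alpha_0<\omega^\omega$ with $\alpha_0>\alpha$ (say $\alpha_0=\omega^N$), whose preimage $f^{-1}(\alpha_0)$, when it exists, \emph{is} a definable vertex. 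This is exactly why the paper works with the bounded relations $R_{\mathrm{st},\alpha_0}$ for fixed $\alpha_0<\omega^\omega$ rather than with the whole segment below $\omega^\omega$.

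The second problem is that the step you yourself call ``the crux'' --- monadic definability of the pulled-back standard sequences from Cantor normal forms --- is precisely the entire content of the paper's proof, and your proposal leaves it as an unproven assertion. What must be shown is that for each fixed $n$ the set $A_n=\{v\in V : f(v)=\gamma+\omega^n\ \text{for some}\ \gamma\}$ is monadically definable in $G$; the paper does this by induction on $n$: $A_0$ is the set of non-limit, non-minimal vertices, and $A_{n+1}$ is the set of limit points of $A_n$ that are not limit points of the set of limit points of $A_n$. From finitely many of the $A_n$ (only exponents below a fixed $N$ matter once you have bounded by $\alpha_0=\omega^N$) one defines the definable vertex $f^{-1}(\alpha_0)$ and the relation $R_{\mathrm{st},\alpha_0}$, checks that the induced system of cofinal sequences has Bachmann property (needed to apply Theorem \ref{Fast_growing_equivalency_1}), and concludes as you describe. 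So your proposal correctly identifies the target and the reduction, but the one definability claim it does make is false as stated, and the one it defers is the actual proof; both are repairable exactly along the paper's lines.
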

\begin{proof}  We claim that for every $\alpha_0<\omega^{\omega}$ the following relation $R_{\mathrm{st},\alpha_0}$ on $V$ is monadically definable:
$$ v_1 R_{\mathrm{st},\alpha_0} v_2\stackrel{\text{def}}{\iff} \mbox{$f(v_1),f(v_2)<\alpha_0$ and $\exists n (s_{\mathrm{st}}(f(v_2),n)=f(v_1))$}.$$
 The claim obviously follows from the fact that for every $n$  we can monadically define the set $A_n$ of all $v\in V$ such that $f(v)$ is of the form $\alpha+\omega^{n}$. We prove  monadic definability of mentioned sets by induction on $n$. The set $A_0$ is the set $$V\setminus (V^{\mathrm{lim}}\cup\{\inf_{R_c}V\}).$$ The set $A_{n+1}$ is the set of all limit points $v$ of $A_n$ such that  $v$ isn't a limit point of the set of all limit points of $A_n$.

Using Theorem \ref{Fast_growing_equivalency_1}  and the claim we straightforward conclude that  theorem holds.
\end{proof}
\bibliographystyle{plain}
\bibliography{bibliography}
\end{document}